\newtheorem{theorem}{Theorem}
\newtheorem{assumption}[theorem]{Assumption}
\newtheorem{remark}[theorem]{Remark}
\newtheorem{corollary}[theorem]{Corollary}
\newtheorem{lemma}[theorem]{Lemma}
\newtheorem{definition}[theorem]{Definition}
\newtheorem{proposition}[theorem]{Proposition}
\renewcommand{\cal}[1]{\mathcal{#1}}
\renewcommand{\r}{\mathbb{R}}
\newcommand{\rn}{\mathbb{R}^n}
\newcommand{\norm}[1]{\left|\left|{#1}\right|\right|}
\newcommand{\Pb}{\mathbb{P}}
\newcommand{\Pbl}[1]{\mathbb{P}_\lambda\left(#1\right)} 
\newcommand{\Pbp}[1]{\mathbb{P}_\pi\left(#1\right)}
\newcommand{\hd}{\#_d}
\renewcommand{\rhd}{\r^{\hd}}
\newcommand{\mg}[1]{\left|{#1}\right|}
\newcommand{\s}{\mathcal{S}}
\newcommand{\n}{\mathbb{N}}
\newcommand{\nn}{\mathbb{N}^n} 
\newcommand{\zp}{\mathbb{Z}_+}
\newcommand{\ave}[1]{\left\langle #1 \right\rangle_\pi}
\newcommand{\aver}[1]{\left\langle #1 \right\rangle_{\pi^r}}
\newcommand{\averr}[1]{\left\langle #1 \right\rangle_{\pi_{|r}}}
\newcommand{\mmag}[1]{\left| #1 \right|}
\begin{document}
\title{Rigorous bounds on the stationary distributions of the chemical master equation via mathematical programming}
\author{Juan Kuntz}
\email{juan.kuntz08@imperial.ac.uk}
\affiliation{Department of Mathematics, Imperial College London, London, United Kingdom}
\affiliation{Department of Bioengineering, Imperial College London, London, United Kingdom}

\author{Philipp Thomas}
\email{p.thomas@imperial.ac.uk}
\affiliation{Department of Mathematics, Imperial College London, London, United Kingdom}

\author{Guy-Bart Stan}
\email{Corresponding author: g.stan@imperial.ac.uk}
\affiliation{Department of Bioengineering, Imperial College London, London, United Kingdom}

\author{Mauricio Barahona}
\email{Corresponding author: m.barahona@imperial.ac.uk}
\affiliation{Department of Mathematics, Imperial College London, London, United Kingdom}

\date{\today}

\begin{abstract}
The stochastic dynamics of biochemical networks are usually modelled with the chemical master equation (CME). The stationary distributions of CMEs are seldom solvable analytically, and numerical methods typically produce estimates with uncontrolled errors.
Here, we introduce mathematical programming approaches that yield approximations of these distributions with computable error bounds which enable the verification of their accuracy.
First, we use semidefinite programming to compute increasingly tighter upper and lower bounds on the moments of the stationary distributions for networks with rational propensities. 
Second, we use these moment bounds to formulate linear programs that yield convergent upper and lower bounds on the stationary distributions themselves,  their marginals and stationary averages. 
The bounds obtained also provide a computational test for the uniqueness of the distribution. In the unique case, the bounds form an approximation of the stationary distribution with a computable bound on its error. 
In the non-unique case, our approach yields converging approximations of the ergodic distributions. We illustrate our methodology through several biochemical examples taken from the literature: Schl\"ogl's model for a chemical bifurcation, a two-dimensional toggle switch, a model for bursty gene expression, and a dimerisation model with multiple stationary distributions.

\end{abstract}

\maketitle

\section{Introduction}
Cell-to-cell variability is pervasive in cell biology. A fundamental source of this variability is the fact that biochemical reactions inside cells often involve 
only a few molecules per cell~\cite{elowitz2002,taniguchi2010,uphoff2016}. Such reactions are key components in gene regulatory and signalling networks involved in cellular adaptation and cell fate decisions~\cite{arkin1998,Dandach2010,thomas2014}. 
Mathematically, stochastic reaction networks are modelled using continuous-time Markov chains whose distributions satisfy the chemical master equation (CME). As the availability of accurate single cell measurements widens, it is crucial to develop reliable methods for the analysis of the CME that facilitate parameter inference~\cite{munsky2015,frohlich2016}, both for   the identification of molecular mechanisms~\cite{neuert2013} and for the design of synthetic cellular circuits~\cite{Strelkowa:2010,Strelkowa:2011,oyarzun2014,zechner2016,TOMAZOU2018}.

Significant effort has been devoted to investigating the stationary solutions of CMEs, which determine the long time behaviour of the stochastic process~\cite{Meyn1993a}. While exact\cite{hemberg2007,Hemberg2008} Monte Carlo methods have been developed to sample from stationary solutions of some CMEs, analytical solutions are known only in a few special cases. 
In general, the CME is considered intractable\cite{Schnoerr2017} because, aside of systems with finite state space, it consists of an infinite set of coupled equations. 

An approach to circumvent the intractability of the full CME is to compute moments of its stationary solutions. However, moment computations are only exact for networks of unimolecular reactions; in all other cases, the equations of lower moments involve higher moments, leading to an infinite system of coupled equations that cannot be solved analytically. Moment closure schemes, usually requiring assumptions about the unknown solution, are thus employed to approximate the moments~\cite{engblom2006,engblom2014,lakatos2015,schnoerr2015,vanKampen1976}. Yet few of these methods provide quantified approximation errors~\cite{engblom2014}. 
Another approach is provided by mathematical programming techniques, which have been employed to compute bounds on the moments of Markov processes in various contexts. In such schemes, the finite set of moment equations is supplemented by moment inequalities and the moments are bounded by solving linear programs (LPs)~\cite{Schwerer1996,Helmes2003} or semidefinite programs (SDPs)~\cite{Hernandez-Lerma2003,Kuntz2016}. Alternatively, when the CME has a unique stationary solution, several truncation-based schemes\cite{Hart2012,Gupta2017,Dayar2011,Spieler2014,Dayar2011a} have been proposed to approximate the solution, 
although in most cases they do not provide estimates of the error they introduce.

Here, we present two different mathematical programming approaches that yield bounds on, and approximations of, the stationary solutions of the CME. 
Our first approach builds on our previous work~\cite{Kuntz2016,Kuntz2017,Kuntzthe},
and uses semidefinite programming to obtain upper and lower bounds on the moments of stationary solutions of networks with polynomial and rational propensities. 
The scheme constrains the possible solutions of a truncated, underdetemined set of moment equations by appending semidefinite inequalities that are satisfied by all probability distributions on the state space. Independently of this work, similar approaches have been recently proposed 
for networks with polynomial propensities~\cite{Kuntz2017,Sakurai2017,Dowdy2017,Ghusinga2017a}. Here we extend the mathematical framework to rational networks of interest in biochemistry, and we state rigorous, checkable mathematical conditions for the validity of the approach.

The second approach 
employs linear programming
to obtain lower and upper bounds on stationary averages of the CME, using state space truncations guided by moment bounds computed with our first (SDP) approach. 
In the case of a unique stationary solution, we prove that the LP bounds converge to the true average as the truncation approaches the entire state space.
Because stationary averages can be tailored to bound \emph{distributions}, the LP bounds provide approximations that converge in total variation to the stationary solution and its marginals 
with a computable approximation error (see also Ref.~\cite{Kuntz2018a}). Additionally, the LP bounds provide a computational test for the uniqueness of the stationary solution,
a prerequisite for most other approximation schemes.  In the non-unique case, the scheme provides converging approximations of the ergodic~distributions.
 
The paper is organised as follows. In Section~\ref{sec:stationary}, we introduce definitions regarding stationary solutions of the CME. Section~\ref{bmom} presents the SDP method to bound moments: first, the conceptual framework is introduced analytically for a simple birth-death process that displays a chemical bifurcation, followed by the general computational approach  for multi-species networks with rational propensities using semidefinite programming.
Section~\ref{bdist} presents the LP approach to bound and approximate entire stationary solutions, their averages and marginals: first, the mathematical framework is introduced through semi-analytical expressions for birth-death processes, followed by the general computational approach for rational networks using linear programming. 
In Section~\ref{sec:numexamples}, we apply the methods to three additional examples: 
a toggle switch, a model of bursty gene expression with negative feedback, and a model with multiple stationary solutions. 
We conclude with a discussion in Section~\ref{sec:discussion}.  For completeness, Appendix~\ref{appendix1} presents theoretical results linking stationary distributions of continuous-time chains and the stationary solutions of CMEs, and Appendix~\ref{appendix2} presents a Foster-Lyapunov criterion that guarantees existence and finiteness of moments of the stationary distribution for the examples in the paper.
\section{Notation and definitions}
\label{sec:stationary}

Stochastic biochemical kinetics under well-mixed conditions are usually described by a set of $m$ reactions $R_j$ involving $n$ species $S_1,S_2,...,S_n$:
\begin{align}
\label{eq:network}
 R_j:\enskip v_{1j}^-S_1+\dots+v_{nj}^-S_n \xrightarrow{a_j} v_{1j}^+S_1+\dots+v_{nj}^+S_n \\ 
 \quad j=1,\ldots,m,  \nonumber
\end{align}
where $v_{ij}^{\pm}\in\n$ denote the stoichiometric coefficients, and $a_j:\nn\to[0,\infty)$ is the propensity of reaction $R_j$. 

Formally, the state of the system is described by the 
random variable $X(t)=(X_1(t),\dots,X_n(t)) \in \nn$, a vector with components representing the number of molecules of each species at time $t$. 
The dynamical process is modelled with a minimal continuous-time Markov chain\cite{Norris1997} with rate matrix $Q=(q(x,y))$ defined by:
\begin{equation}
\label{eq:qmatrixsrn}
q(x,y):=\sum_{j=1}^ma_j(x)(1_{x+v_j}(y)-1_x(y)),
\end{equation}
where $v_j:=(v_{1j}^+-v_{1j}^-,\dots,v_{nj}^+-v_{nj}^-)$ denotes the stoichiometric vector containing the net changes in molecule numbers produced by reaction $R_j$, 
and $1_y$ denotes the indicator function of state $y$:
\begin{align}
\label{eq:indicator}   
1_{y}(x) :=
    \begin{cases*}
      1 & if $x=y$ \\
      0 & otherwise.
    \end{cases*}
\end{align}

The state of the system takes values in a subset $\s \subseteq \n^n$, known as the \emph{state space}, 
with (possibly infinite) cardinality $\mmag{\s}$. The set $\s$ must be chosen such that 
\begin{align}
\label{eq:qmatrix}
 q(x,y) & \geq0 \enskip & \forall x\neq y,\\ 
 -q(x,x) &= \sum_{y \in \s, \, 
y \neq x} q(x,y)<\infty\enskip &\forall x\in\cal{S},
\label{eq:qmatrix2}
\end{align}
in which case $Q$ is said to be~\emph{totally stable}~and~\emph{conservative}.

If the Markov chain cannot leave the state space in finite time, the matrix $Q$ is said to be \emph{regular} (see Appendix~\ref{appendix1}). In this case, the collection of probabilities $p_t(x)$ of observing the chain in state $x$ at time $t\geq0$ is the only solution of the \emph{chemical master equation} (\emph{CME})
\begin{align}\label{eq:CME}
\frac{\mathrm{d}p_t(x)}{\mathrm{d}t} = p_tQ(x),\quad p_0(x)=\lambda(x)\qquad\forall x\in\s,
\end{align}
where we define the vector $p_t:=(p_t(x))_{x\in\s}$ and
\begin{align} \label{eq:innerprod}
p_tQ(x):=\sum_{y\in\s}p_t(y)q(y,x), 
\end{align}
Following standard convention, probability distributions and measures are defined throughout as row vectors.

Any probability distribution $\pi:=(\pi(x))_{x\in\s}$ 
that solves the equation
\begin{equation}\label{eq:stat}\pi Q(x)=0\qquad\forall x\in\s,\end{equation}
is called a \emph{stationary solution} of the CME. By definition, each stationary solution belongs to the space 
$$\ell^1:=\left\{(\rho(x))_{x\in\s}:\sum_{x\in\s}\mmag{\rho(x)}<\infty\right\}$$
of absolutely summable real sequences indexed by states in $\s$. The set of all stationary solutions forms a convex polytope in $\ell^1$:
\begin{align}
\label{eq:cmestat}
\cal{P}:=\left\{\pi\in\ell^1 : 
\begin{array}{l}
   \pi \, Q(x)=0,\, \, \forall x\in\s, \\
 \pi(\s) :=\sum_{x\in\s}\pi(x) = 1, \\ \pi\geq0 
 \end{array}
 \right\},
 \end{align}
where
$\pi\geq0$ is shorthand for $\pi(x)\geq0, \forall x\in\s$. 
For most networks of interest 
the stationary solutions determine the long-term behaviour of the chain 
(see Appendix~\ref{appendix1}).
In many cases, we will be interested in obtaining the $\pi$-average of a real-valued function $f$ on $\s$:
$$\ave{f}:=\sum_{x\in\s}f(x)\pi(x).$$
For example, the $k$-th stationary moment is $\ave{x^k}$.

\section{Bounding the stationary moments of the CME}
\label{bmom}

As a first use of optimisation techniques, we present a systematic approach that yields bounds of increasing tightness on the stationary moments of reaction networks
with polynomial or rational propensities,
and we give rigorous sufficient conditions for its validity. 
The moment bounds obtained in this section will be used in conjunction with linear programming to bound the full stationary distributions of the CME in Sec.~\ref{bdist}.

To motivate our optimisation approach, we first present the mathematical formulation through a simple example, for which explicit analytical expressions can be obtained (Sec.~\ref{sec:schloegl_moments}).
For more complex systems, the approach can be implemented computationally in a systematic manner through  
a general semidefinite programming method (Sec.~\ref{sec:mommulti}). 
Readers interested in the computational approach (and not the theory behind it) should skip Sec.~\ref{sec:schloegl_moments} and go directly to Sec.~\ref{sec:mommulti}.
\begin{figure}
	\begin{center}
	\includegraphics[width=0.423\textwidth]{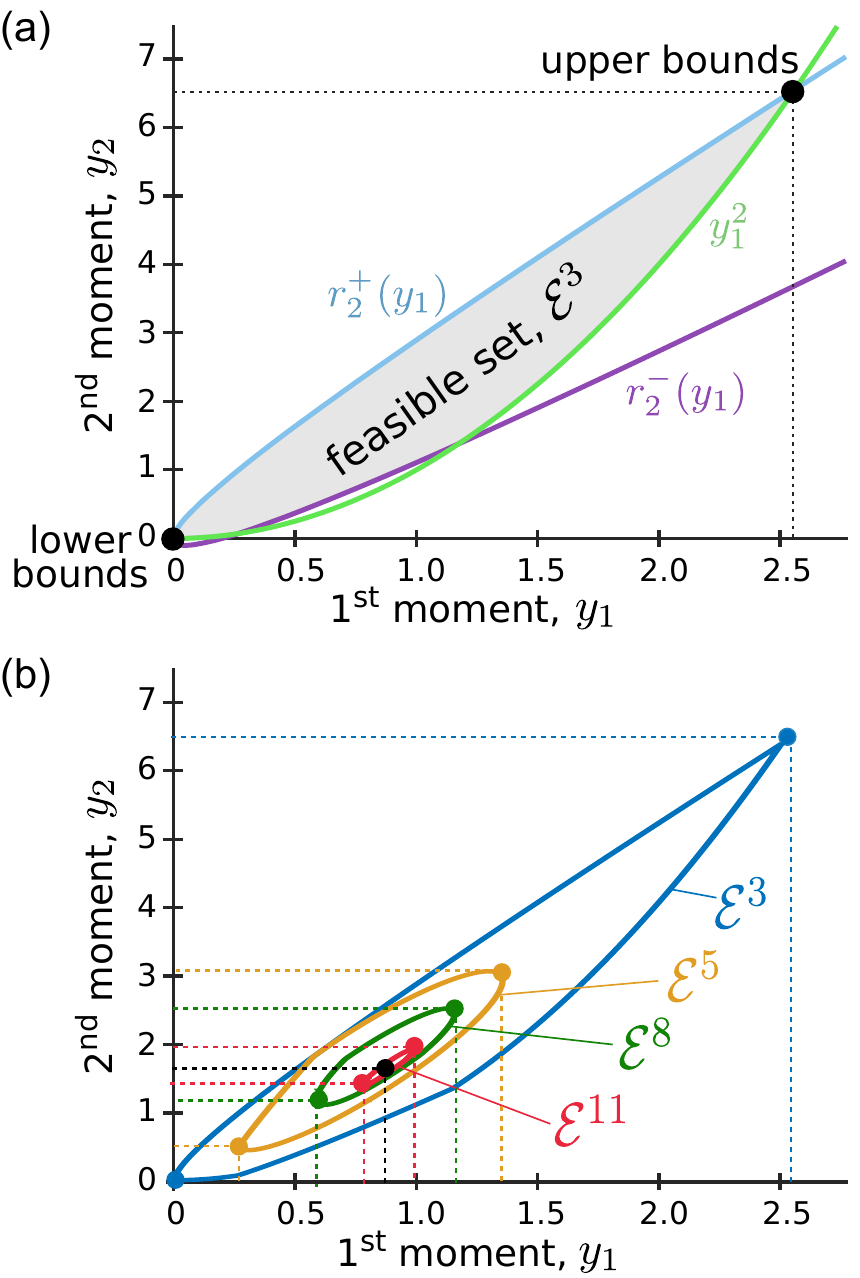}
	\vspace{-20pt}
	\end{center}
\caption{\textbf{Outer approximations and bounds for the moments of the stationary solution of Schl\"ogl's model~\eqref{eq:smdl}. } 
\textbf{(a)} Grey area: projection on the $y_1$--$y_2$ plane of $\cal{E}^3$~\eqref{eq:E3_bounds}, an outer approximation of the set of stationary moment vectors. Black dots mark the upper and lower bounds on the first and second moments~\eqref{eq:e3finboun}.
\textbf{(b)} By appending further moment equations and inequalities, the outer approximations $\cal{E}^d$~\eqref{eq:spectrahedron} can be tightened systematically as we increase the order $d$. The boundaries of the sets $\cal{E}^d$ (lines in different colours) were computed explicitly by applying Mathematica's Reduce function to~\eqref{eq:spectrahedron}.
The singleton set of stationary moment vectors (black dot) is always contained in $\cal{E}^d$. The increasingly tighter lower and upper bounds on the moments~\eqref{eq:powermoments2} (coloured dots) are computed by solving the SDPs~\eqref{eq:sdp1}--\eqref{eq:sdp2}.  Parameter values: $k_1=1, k_2=1, k_3=0.8, k_4=1$.}
\label{fig:set}
\end{figure}
\subsection{A simple analytical example: moment bounds for Schl\"ogl's model
} 
\label{sec:schloegl_moments}
To illustrate the mathematical framework, consider the classic autocatalytic network with a single species $S$ proposed by Schl\"ogl~\cite{schlogl1972} as a model for a chemical bifurcation:
\begin{equation}
\label{eq:smdl}
2S\underset{a_2}{\overset{a_1}{\rightleftarrows}}3S,
\qquad \varnothing \underset{a_4}{\overset{a_3}{\rightleftarrows}}S, 
\end{equation}
with mass-action propensities:
\begin{align}
& a_1(x):=k_1 x (x-1),
\quad a_2(x):= k_2 x (x-1) (x-2) \label{eq:schloegl_props1}\\ 
& a_3(x):= k_3, 
\qquad  a_4(x):=k_4x,
\label{eq:schloegl_props2}
\end{align}
where $k_1, k_2, k_3, k_4>0$ are rate constants.  
Here, $n=1$ and the state space is $\s = \mathbb{N}$. 

The CME of this network has a unique stationary solution $\pi$ and all of its moments are finite (see~Appendix~\ref{appendix2}). 
The reaction network, as encoded in the rate matrix $Q$, imposes certain relationships between the stationary moments $\ave{x^k}$. Such relations form an infinite system of coupled \textit{stationary moment equations}:
\begin{equation}
\label{eq:moment_hierarchy}
\ave{Q x^k }:= \sum_{x'\in\s} \sum_{x\in\s}q(x',x) \, x^k  \, \pi(x')= 0, \quad k \in \n.
\end{equation}
Except in particular instances, it is not possible to solve this coupled system exactly, and closure approximations are usually adopted by neglecting higher order moments. 

An alternative approach is that of mathematical programming, which includes additional constraints in the form of inequalities that must be fulfilled by the moments of distributions. Including such inequalities allows us to obtain feasible regions for the solutions of the system, and hence rigorous bounds for the moments. Increasing the number of inequalities considered, restricts the feasible region further and makes the bounds tighter. 

Let us consider the first moment equation for~\eqref{eq:smdl}: 
\begin{align}
\label{eq:schmom} 
&\ave{Q x } = b_1 \ave{1} -b_2 \ave{x} +b_3 \ave{x^2} -b_4 \ave{x^3} = 0, 
\end{align}
where  $b_1:=k_3$, $b_2:=k_1+2k_2+k_4$, $b_3:=k_1+3k_2$, $b_4:=k_2$ are positive numbers. 
Even after noting that  
\begin{align}
\label{eq:pos_moments}
\ave{1}=1,
\end{align}
as $\pi$ is a probability distribution, Eq.~\eqref{eq:schmom} is underdetermined. 
However, further relationships between moments can be added to constrain the system. For example, the non-negativity of the variance implies the inequality:
\begin{align}
\label{eq:variance}
\ave{x^2}-\ave{x}^2 \geq 0.    
\end{align}
Further inequalities involving higher moments can be built systematically from polynomial functions as follows.

Consider the polynomial $f(x):=f_0+f_1x$, where $\mathbf{f}:=(f_0, f_1)^T \in\r^2$ is  
a (column) vector of polynomial coefficients. 
Clearly, $f^2 (x)$ and $x \, f^2 (x)$ are  non-negative on $x \in [0,\infty)$. 
Hence it follows that
\begin{align}
\ave{f^2}&=f_0^2\ave{1}+2f_0f_1 \ave{x} +f_1^2\ave{x^2} \geq 0 
\label{eq:first_inequal}\\
\ave{x f^2}&=f_0^2\ave{x}+2f_0f_1\ave{x^2}+f_1^2\ave{x^3} \geq 0 
\label{eq:second_inequal}
\end{align}
Let us define the vector of moments 
$$z:=\begin{pmatrix} \ave{1}, \ave{x}, \ave{x^2}, \ave{x^3} 
\end{pmatrix}^T \in \r^4.$$ 
Then the inequalities~ 
\eqref{eq:first_inequal}--\eqref{eq:second_inequal} are written compactly~as
\begin{align}
\ave{f^2} 
&=\mathbf{f}^T M_3^0(z)\mathbf{f} \, \mathbf{}\geq 0,\label{eq:nfdyuhsanshua1}\\
\ave{x f^2} 
&=\mathbf{f}^T M_3^1(z) \, \mathbf{f}\geq 0,\label{eq:nfdyuhsanshua2}
\end{align}
where  the matrices $M^3_0(y)$ and $M^3_1(y)$ are defined by
$$M_3^0(y)  := \begin{bmatrix}y_0 &y_1 \\ y_1 &y_2\end{bmatrix},\qquad 
M_3^1(y) := \begin{bmatrix}y_1 &y_2 \\y_2 &y_3\end{bmatrix},$$
for any four-dimensional vector $y=(y_0,y_1,y_2,y_3)^T\in\r^4$. 
Since \eqref{eq:nfdyuhsanshua1}--\eqref{eq:nfdyuhsanshua2} hold for all $\mathbf{f}\in\r^2$, we have that $M^3_{0}(z)$ and $M^3_{1}(z)$ are positive semidefinite (p.s.d.):
\begin{align}
\label{eq:schmmm}
M_3^0(z)\succeq0,\qquad M_3^1(z)\succeq0.
\end{align}
From Sylvester's criterion, \eqref{eq:schmmm} is equivalent to
\begin{align}
z_0\geq0,\quad z_1\geq0, &\quad z_2\geq0,\quad z_3\geq0,\quad \label{eq:schoin1}\\
z_0z_2-z_1^2 &\geq0,  \label{eq:schoin2}  \\
z_1z_3-z_2^2 &\geq0. \label{eq:schoin3} 
\end{align}
Hence 
\eqref{eq:schoin2} recovers~\eqref{eq:variance}, 
whereas~\eqref{eq:schoin3} gives an additional condition involving the first three moments. 

Putting \eqref{eq:schmom}--\eqref{eq:schmmm} together, we conclude that 
the singleton set $\{z\}$ of vectors  whose entries are composed of the first four moments of the stationary solutions of Schl\"ogl's model
belongs to the set
\begin{align}
\cal{E}^3=\left\{y\in\r^4:
\begin{array}{l}y_0=1 \\ 
b_1y_0-b_2y_1+b_3y_2-b_4y_3 = 0 \\
M_3^0(y) \succeq 0 \\ 
M_3^1(y) \succeq 0
\end{array}\right\}.
\label{eq:E3}
\end{align}
We say that $\cal{E}^3$ is an \emph{outer approximation} of the set of stationary moment vectors. Hence the stationary moments have the following lower and upper bounds:
\begin{align}
L^3_\alpha:=\inf\{y_\alpha:y\in\cal{E}^3\}  & \le \ave{x^\alpha} \le \sup\{y_\alpha:y\in\cal{E}^3\}=:U^3_\alpha \nonumber \\
& \alpha = 0,1,2,3.
 \label{eq:schloegl_moment_bounds}
 \end{align}
 
Such bounds are usually handled computationally, but it is illustrative to obtain explicit expressions in this simple case. 
Combine~\eqref{eq:schoin3} with the equalities in~\eqref{eq:E3} to get 
$$b_1y_1+b_3y_1y_2-b_4y_2^2-b_2y_1^2\geq0.$$
Assuming $b_3\geq 2\sqrt{b_2b_4}$ (the other case is analogous), 
we use the quadratic formula 
to obtain the bound
\begin{align}
& r_2^-(y_1)  \leq y_2  \leq r_2^+(y_1) \\
%
r_2^\pm(x) :=&\frac{b_3x  \pm\sqrt{4b_1b_4x+(b_3^2-4b_2b_4)x^2}}{2b_4}.
\end{align}
Hence $\cal{E}^3$ can be rewritten equivalently as:
\begin{align}
\cal{E}^3=\left\{y\in\r^4:
\begin{array}{l}y_0=1 \\
y_1\geq0 \\
\max\{y_1^2,r_2^-(y_1)\}\leq y_2\leq r_2^+(y_1)\\
y_3=(b_1-b_2y_1+b_3y_2)/b_4
\end{array}\right\}.
\label{eq:E3_bounds}
\end{align}
Figure~\ref{fig:set}(a) shows the projection of  $\cal{E}^3$ onto the $y_1$-$y_2$ plane.
From~\eqref{eq:E3_bounds}, it is clear that $(1,0,0,b_1/b_4) \in \cal{E}^3$, 
and the lower bounds in~\eqref{eq:schloegl_moment_bounds} for the first two moments are trivial: $(L^3_1,L^3_2)=(0,0)$. 
The upper bounds, however, are not. 
Since $r^+_2(x) > r^-_2(x)$ for all $x>0$, 
the upper bounds in~\eqref{eq:schloegl_moment_bounds} 
are obtained by the northeasternmost intersection of $y_1^2$ and $r^+_2(y_1)$:
$(U^3_1,U^3_2)=(r_4,r_2^+(r_4))$, where $r_4$ is the rightmost root of 
$x(b_1-b_2x+b_3x^2-b_4x^3)=0$. In summary, we get:
\begin{equation}\label{eq:e3finboun}0\leq \ave{x}\leq r_4,\qquad 0\leq \ave{x^2}\leq r^+_2(r_4).\end{equation}
As seen in Fig.~\ref{fig:set}(a), the analytical bounds based on $\cal{E}^3$ are rough. However, we show in the following section how to obtain tighter bounds systematically by appending further moment equations and inequalities and solving the associated optimisations over higher order sets~$\cal{E}^d$.

\subsection{The general approach: Bounding the moments of rational CMEs by solving semidefinite programs}
\label{sec:mommulti}

The approach in the previous section can be applied to any reaction network~\eqref{eq:network} with $n$ species and state space $\s$, as long as the propensities 
of its $m$ reactions 
are rational (or polynomial) functions, i.e., they can be rewritten as
\begin{equation}
\label{eq:numden}
a_j(x) := \dfrac{b_j(x)}{s(x)}\qquad j=1,\dots,m,
\end{equation}
where $b_1,\dots,b_m$ and $s$ are polynomials on $\rn$ and the common denominator $s$ satisfies $s(x)>0, \forall x\in\s$.

To deal with multiple species, we use standard multi-index notation:
$x^\alpha :=x_1^{\alpha_1}x^{\alpha_2}\dots x^{\alpha_n}_n$, where $\alpha$ is the multi-index $(\alpha_1,\alpha_2,\dots,\alpha_n)\in\nn$ and $\mmag{\alpha}:= \alpha_1 + \alpha_2+\dots+\alpha_n$ is the degree of the monomial. Polynomial functions are expressed in terms of such monomials. For instance, the denominator of~\eqref{eq:numden} is: 
\begin{align}
\label{eq:denominator}
s(x) = &\sum_{\mmag{\beta}\leq d_s}s_\beta \, x^\beta, 
\end{align}
where $d_s$ is the degree of $s$ and we define the (column) vector of coefficients
\begin{align}
\label{eq:coeff_s}
    \mathbf{s}:= (s_\beta)_{\mmag{\beta}\leq d_s}.
\end{align}
We also define the stationary rational moments
\begin{equation}
\label{eq:ch7vmom}
z_\beta:=\ave{\dfrac{x^\beta}{s}}.
\end{equation}
which are directly related to the raw moments: 
\begin{equation}
\label{eq:moment_expression}
\ave{x^\alpha}=
\sum_{\mmag{\beta}\leq d_s}s_\beta \, z_{\alpha+\beta}.
\end{equation}
The following checkable assumption is a \emph{sufficient condition} for our general SDP approach to apply to generic reaction networks with rational propensities.

\begin{assumption}[Order of the approximation and finiteness of moments]
\label{ass:1}
Recall that $d_s$ is the degree of the denominator~$s$ in \eqref{eq:numden}.
Let us denote the order of the approximation by an integer $d \geq d_s$, 
and let us compile the stationary rational moments~\eqref{eq:ch7vmom} up to order $d$ into the (column) vector
\begin{equation}
\label{eq:vector_z}
z:=(z_\beta)_{\mmag{\beta}\leq d}
\end{equation}
of dimension $\#_d:=\binom{n+d}{n}$.

We assume that all stationary solutions of the CME have finite rational moments up to order $d+1$, 
i.e.,  
$$z_\beta=\ave{\dfrac{x^\beta}{s}}<\infty\quad\forall\beta:\mmag{\beta}\leq d+1, \quad\forall\pi\in\cal{P}.$$
This requirement can be verified using a Foster-Lyapunov criterion as detailed in Appendix~\ref{appendix2}. 
\end{assumption}

\begin{remark}
 From~\eqref{eq:moment_expression} and Assumption~\ref{ass:1} it follows that raw moments $\ave{x^\alpha}$ with
$\mmag{\alpha}\leq d+1-d_s$ 
 are also finite.
\end{remark}

In order to write down each $\alpha$-moment, it is helpful to define the associated polynomial function $g_\alpha(x)$:
\begin{align}
s(x) \,Q x^\alpha 
&=\sum_{j=1}^m b_j(x) \,((x+v_j)^\alpha-x^\alpha) \nonumber  \\
&=\sum_{\mmag{\beta}\leq d_{g_\alpha}} {(g_\alpha)}_\beta \, x^\beta  =: g_\alpha(x), 
\label{eq:g} 
\end{align}
with degree $d_{g_\alpha} = \mmag{\alpha}+d_b -1$, where
$d_b:=\max\{d_{b_i}\}$~is~the maximum degree of the numerators 
in~\eqref{eq:numden}. We also define the (column) vector of polynomial coefficients
\begin{equation}
\label{eq:g_vector}
 \mathbf{g}_\alpha  :=\left({(g_\alpha)}_\beta\right)_{\mmag{\beta}\leq d},
\end{equation}
where ${(g_\alpha)}_\beta = 0$ if $\mmag{\beta}>d_{g_\alpha}$. 

The finiteness of moments guarantees that a subset of moment equations will hold, as stated in the following lemma.

\begin{lemma}[The moment equations]\label{lem:momeqs}If Assumption~\ref{ass:1} is satisfied and $\pi \in \cal{P}$, then the $\alpha$-moment equation
\begin{equation}
\label{eq:momeqs}
\ave{Qx^\alpha}=
z^T\mathbf{g}_\alpha = 0 
\end{equation}
holds for every $\alpha\in\nn$ such that $\mmag{\alpha}\leq d-d_b+1$.
\end{lemma}

\begin{proof}Consider the adjoint
of \eqref{eq:stat}: 
\begin{equation}
\label{eq:steq}
\ave{Q f} 
= \sum_{x\in\s} \sum_{y\in\s}q(x,y) f(y) \, \pi(x)= 0,
\end{equation}
which, by Fubini's theorem, 
is valid for any $f$ as long as $\sum_{x\in\s}\mmag{q(x,x)f(x)}\pi(x)$
is finite~\cite{Glynn2008}. Because
$$
s(x)\mmag{q(x,x)x^\alpha}=
\sum_{j=1}^mb_j(x)x^\alpha=
\sum_{j=1}^m\sum_{\mmag{\beta}\leq d_{b}}{(b_j)}_\beta \, x^{\alpha+\beta},$$
then Assumption~\ref{ass:1} implies
$$
\sum_{x\in\s}\mmag{q(x,x)x^\alpha}\pi(x)  \leq \sum_{\mmag{\beta}\leq d_b}\left(\sum_{j=1}^m\mmag{(b_j)_{\beta}}\right)z_{\beta+\alpha}<\infty,$$
for all $\mmag{\alpha}\leq d-d_b+1$. Setting $f(x):=x^\alpha$ in~\eqref{eq:steq}, we~get:
\[
0=\ave{Qx^\alpha}=\ave{\frac{g_\alpha}{s}}= 
\sum_{\mmag{\beta}\leq d} {(g_\alpha)}_\beta  \ave{\frac{x^\beta}{s}}
=z^T\mathbf{g}_\alpha.
\]
\end{proof}
In addition to the  moment equations~\eqref{eq:momeqs}, 
the moments $z$ satisfy additional constraints. 
Firstly, since $\pi$ is a probability distribution, we have:
\begin{equation}\label{eq:normalisation_cond}z^T\mathbf{s}=\sum_{\mmag{\beta}\leq d}s_\beta \ave{\frac{x^\beta}{s}}=\ave{\frac{s}{s}}=\ave{1}=1.\end{equation}
Furthermore, the rational moments 
satisfy well-known semidefinite inequalities\cite{Lasserre2009,Blekherman2013,Kuntzthe}. 
Specifically, the localising matrices are positive semidefinite:
\begin{align}
 \label{eq:moment_matrices_posdef}
 M^i_d(z) \succeq 0\qquad\forall i=0,\dots,n,
 \end{align}
where the 
$M^i_d(y)$
are defined by
\begin{align*}
 [M^0_d(y)]_{\alpha\beta}&:=y_{\alpha+\beta},     & \forall \alpha,\beta:\mmag{\alpha},\mmag{\beta}\leq \lfloor d/2\rfloor,\notag\\
 [M^i_d(y)]_{\alpha\beta}&:=y_{\alpha+\beta+e_i}, & \forall \alpha,\beta:\mmag{\alpha},\mmag{\beta}\leq \lfloor(d-1)/2\rfloor,
\end{align*}
with $e_i$ denoting the $i^{th}$ unit vector 
and $y \in \rhd$.

The inequalities~\eqref{eq:moment_matrices_posdef}
follow from the fact that for any polynomial function $f(x)$ of degree 
$\lfloor d/2\rfloor$ with (column) vector of coefficients $\mathbf{f}=(f_\beta)_{\mmag{\beta}\leq \lfloor d/2\rfloor}$, we have
\begin{align}
&\mathbf{f}^T M^0_d(z) \mathbf{f}=\sum_{\mmag{\alpha}\leq\lfloor d/2\rfloor} \sum_{\mmag{\beta}\leq\lfloor d/2\rfloor} f_\alpha f_\beta z_{\alpha+\beta}\nonumber\\
&=\ave{\frac{\left(\sum_{\mg{\alpha}\leq\lfloor d/2\rfloor}f_\alpha x^{\alpha}\right)\left(
\sum_{\mmag{\beta}\leq\lfloor d/2\rfloor}f_\beta x^{\beta}  \right)}{s}} \nonumber\\
&=\ave{\frac{f^2}{s}}\geq0.\label{eq:semial1}
\end{align}
Similarly, it can be shown\cite{Lasserre2009,Blekherman2013} that
\begin{equation}
\label{eq:semial2}
\mathbf{f}^T M^i_d(z) \mathbf{f}=\ave{\frac{x_i \, f^2}{s}} \geq 0
\quad\forall i=1,\dots,n,
\end{equation}
Since~\eqref{eq:semial1}--\eqref{eq:semial2} hold for any vector $\mathbf{f}$, the matrices $M^i_d(z)$ are positive semidefinite.

\begin{remark}
In the case of Schl\"ogl's model~\eqref{eq:smdl}, 
we had $s(x)=1$, $d_b=3$ and $\#_d=d+1$. Eq.~\eqref{eq:schmom} is the moment equation~\eqref{eq:momeqs} with $\alpha=1$, and the matrices in~\eqref{eq:schmmm} with $d=3$ are $M^0_3(y)$ and $M^1_3(y)$. 
\end{remark}

\subsubsection{
Bounding the moments} 

We can then  establish the following lemma regarding outer approximations of the set of rational moments.

\begin{lemma}[Outer approximations of the set of rational moments]
\label{inclu}
If Assumption~\ref{ass:1} is satisfied and $\pi\in\cal{P}$, 
the vector of rational moments $z$
belongs to the spectrahedron
\begin{align}
 \label{eq:spectrahedron}
 \cal{E}^d:=\left\{y\in \rhd:
 \begin{array}{l} 
 y^T \mathbf{g}_\alpha=0\quad\forall \mmag{\alpha}\leq d-d_b+1, \\ 
  y^T\mathbf{s}=1, \\
 M_d^i(y)\succeq0\quad \forall i=0,1,\dots,n.
 \end{array}
 \right\},
\end{align}
where $d_b$ is the maximum degree of the numerators in~\eqref{eq:numden}, and $\mathbf{s}$, $\mathbf{g_\alpha}$ and $M_d^i(y)$ are defined in \eqref{eq:denominator},\eqref{eq:g_vector}~and~\eqref{eq:moment_matrices_posdef}, respectively.
\end{lemma}
\begin{proof}The proof follows from Lemma~\ref{lem:momeqs}, \eqref{eq:normalisation_cond}--\eqref{eq:moment_matrices_posdef}.
\end{proof}
 In summary, the vectors of stationary moments of order $d$ are contained in a feasible set $\cal{E}^d$, defined by linear equalities and inequalities, which constitutes an \emph{outer approximation} to the set of moment vectors. 
 There are several implications of this lemma.

Firstly, the outer approximation property implies that extremal points of $\cal{E}^d$  provide bounds on the stationary moments. Specifically, the vector in $\cal{E}^d$ with largest (resp.~smallest) $\alpha$-entry provides an upper (resp.~lower) bound on the $\alpha$-moment. For example, Fig.~\ref{fig:set}(b) shows the projection of $\mathcal{E}^d$ with increasing $d$ onto the $y_1$--$y_2$ plane for Schl\"ogl's model. The northeasternmost (resp.~soutwesternmost) vector of these outer approximations yield upper (resp.~lower) bounds on the first two moments of Schl\"ogl's model.

Secondly, note that the moment matrix $M_d^i(y)$ is a principal submatrix of $M_{d+1}^i(y)$. Since a matrix is p.s.d. if and only if all of its principal submatrices are p.s.d. and $\cal{E}^{d+1}$ includes all moment equations in $\cal{E}^d$, then it follows that every vector in $\cal{E}^{d+1}$ (appropriately truncated) belongs to $\cal{E}^d$. As a result, the outer approximations \emph{tighten} around the set of stationary moment vectors with bounds of increasing quality as the order of the approximation $d$ is increased, as seen inFig.~\ref{fig:set}(b).
These two observations are summarised in the following corollary.

\begin{corollary}[Monotonic moment bounds]
\label{boundcor}
Suppose that Assumption~\ref{ass:1} is satisfied and $\pi\in\cal{P}$.
If $f$ is a polynomial of degree $d_f \leq d$, then
\begin{small}
\begin{align}
\label{monotone1}
L^{d_f}_f \leq L^{d_f+1}_f \leq \dots \leq L^{d}_f  \leq\ave{\frac{f}{s}}
  \leq U^{d}_f  \leq \dots  \leq U^{d_f+1}_f \leq U^{d_f}_f  \
 \end{align}
   \end{small}
\begin{align}
\label{eq:sdp1}
 \text{where } \qquad L^d_f  &: =\inf\{\mathbf{f}^Ty:y\in\cal{E}^d\} \\ 
  \quad U^d_f  &: =\sup\{\mathbf{f}^Ty:y\in\cal{E}^d\}.
 \label{eq:sdp2}
\end{align}
\end{corollary}
\begin{proof}Note that $z \in \cal{E}^d$ because
\[
\ave{\frac{f}{s}}=\ave{\frac{\sum_{\mmag{\beta}\leq d}f_\beta x^\beta}{s}}=\sum_{\mmag{\beta}\leq d}f_\beta\ave{\frac{x^\beta}{s}}=z^T \mathbf{f},
\]
Hence \eqref{monotone1} follows from~\eqref{eq:sdp1}--\eqref{eq:sdp2}. 
As explained in the main text, the monotonicity of the bounds
follows from the definition of $\cal{E}^d$ and the fact that a matrix is p.s.d. if and only if all of its principal submatrices are~p.s.d and $\cal{E}^{d+1}$ includes all moment equations in $\cal{E}^d$. 
\end{proof}

Applying these results to $\ave{x^\alpha}$, the $\alpha$-moment of the CME, is straightforward. Let $f(x):=s(x) x^\alpha$ and choose $d\ge|\alpha|+d_s$ to obtain the bounds
\begin{equation}
\label{eq:powermoments2}
L_\alpha^d:=L^d_f,\qquad U^d_\alpha:=U^d_f.
\end{equation} 
Corollary~\ref{boundcor} establishes that outer approximations $\cal{E}^d$ of increasing order can be used to compute a \emph{monotonically} increasing (resp. decreasing) sequence of lower (resp. upper) bounds for $\ave{x^\alpha}$:
$$L_\alpha^{\mmag{\alpha}+d_s} \leq \ldots \leq L_\alpha^{d} 
\leq \ave{x^\alpha} \leq U_\alpha^{d} \leq \ldots \leq U_\alpha^{\mmag{\alpha}+d_s}.$$

\begin{remark}[The sequence of moment bounds is monotonic but may not converge]
\label{rem:sdpboundconv}
The monotonicity of the bounds does not imply  
that the gap between the bounds $(U_\alpha^{d} - L_\alpha^{d})$ will converge to zero as $d \to \infty$. 
Although in our experience 
the bounds often converge numerically, 
there is no general guarantee for several reasons.
Firstly, the stationary solution may not be unique and, in that case, the lower bounds are limited by the stationary solution with the smallest moment while the upper bounds are limited by that with the largest moment. 
Even if the solution is unique, the bounds may not converge because the semidefinite conditions are tailored to distributions with support on the non-negative real orthant but not to distributions with support on discrete state spaces~\cite{Dowdy2018}, for which more stringent conditions can be produced at a higher computational cost~\cite{Lasserre2002a,Lasserre2009,Dowdy2018}.
\end{remark}

\subsubsection{Computing moment bounds via semidefinite programming}\label{sec:sdpcomp}
Given a reaction network with rational propensities and a polynomial $f$ of degree $d_f$, the moment bounds~\eqref{eq:sdp1}--\eqref{eq:sdp2} are the extreme points of the linear functional $y\mapsto \mathbf{f}^Ty$ over the set $\cal{E}^d$, which is defined by linear equalities and semidefinite inequalities. Hence, computing the bounds amounts to solving a \textit{semidefinite program} (SDP), a convex optimisation problem for which there exist efficient computational tools. 
Therefore, instead of \textit{ad hoc} analytical manipulations, like those leading to~\eqref{eq:e3finboun},
a general procedure by constructing and solving the SDPs systematically is implemented as follows: 
\begin{enumerate}
\item Rewrite the reaction propensities in the form~\eqref{eq:numden} removing all common factors and setting $s$ to be the lowest common denominator.
\item Verify the existence of stationary solutions $\pi$ and choose the order of the approximation $d$, an integer $d\geq d_f$ for which the $d+1$ stationary moments are  finite (Assumption~\ref{ass:1}) using a Foster-Lyapunov criterion (Theorem \ref{lyap} in Appendix~\ref{appendix2}).
\item Compute the bounds $L_f^d$ and $U_f^d$ by solving the two SDPs~\eqref{eq:sdp1}--\eqref{eq:sdp2}. We set up the SDPs using the modelling package YALMIP\cite{Lofberg2004}, and solve them using the multi-precision solver SDPA-GMP\cite{Nakata2010} with the interface mpYALMIP\cite{Fantuzzi2016}. 
Examples of computation times are given in the figure captions.

SDPs involving high order moments can be numerically 
ill-conditioned~\cite{Kuntz2016,Fantuzzi2016a,Dowdy2018}. 
Although the origin of this numerical instability remains an open problem, our computations suggest that it could be the result 
of the rapid growth of moments, which leads to ill-conditioned moment matrices.
Such disparity is problematic for standard double-precision SDP solvers but we have mitigated it with the multi-precision solver SDPA-GMP\cite{Nakata2010} as in Ref.~\cite{Fantuzzi2016a}. Alternatively, one can scale the moments \cite{Kuntz2016,Fantuzzi2016a,Dowdy2018}, or adapt recently developed specialised solvers~\cite{Papp2017,Zheng2018}.
\item Evaluate the error of the bounds by computing the gap $U^d_f-L^d_f$. If the gap is unsatisfactorily large, increase the order of the approximation and return to Step 2 to compute new bounds. 

Corollary~\ref{boundcor} guarantees that the bounds will not loosen as we increase $d$, yet the bounds may stagnate (Remark~\ref{rem:sdpboundconv}). In this case, we recommend breaking the impasse by employing the LP approach of the next section (see Fig.~\ref{fig:gene}(b)).

\end{enumerate}

As an example of this procedure, Fig.~\ref{fig:schomom}(a) shows how the computed upper and lower bounds $(L^d_\alpha, U^d_\alpha)$ for the first three stationary moments of Schl\"ogl's model become tighter as we increase $d$, the order of the approximation.  
Fig.~\ref{fig:schomom}(b) combines the moment bounds to obtain bounds on commonly used statistics, e.g., variance, coefficient of variation, and skewness.

\section{Bounding and approximating the stationary solutions of the CME}\label{bdist}
\begin{figure*}
	\begin{center}
	\includegraphics[width=0.9\textwidth]{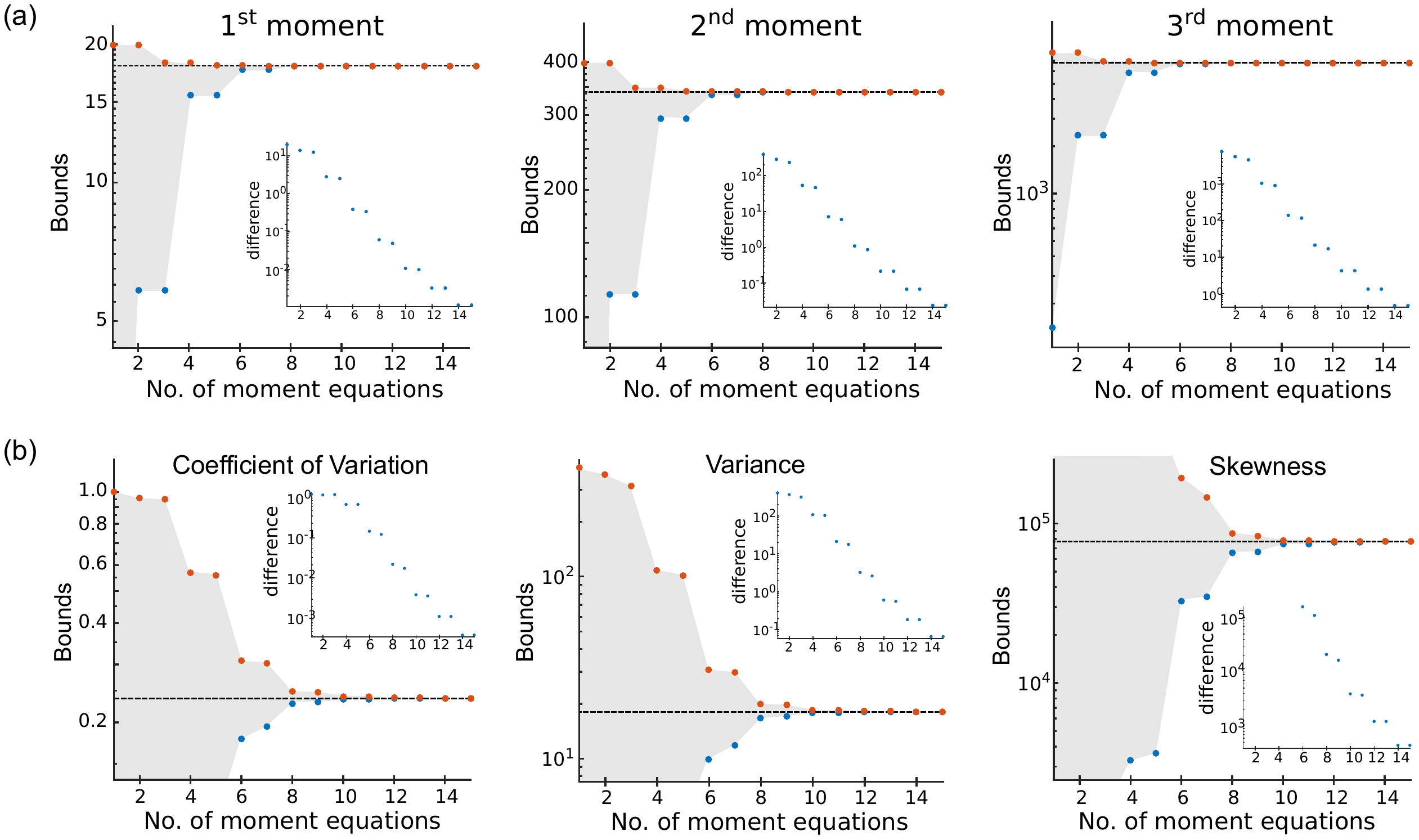}
	\vspace{-20pt}
	\end{center}
\caption{\textbf{Moment bounds for Schl\"ogl's model~\eqref{eq:smdl} using SDPs.} \textbf{(a)} Upper bounds ($U^d_\alpha$, red circles) and lower bounds ($L^d_\alpha$, blue circles) for the first three moments ($\ave{x^\alpha}, \, \alpha = 1, 2, 3$) 
computed for increasing order of the approximation, $d$
(No. of moment equations$= d-2$). The bounds for the first two moments computed by solving the SDPs with $d=3$ coincide with the analytical expressions~\eqref{eq:e3finboun}.
The bounds approach the true moments (dashed lines).
Inset: the gap between upper and lower bounds decreases to zero as $d$ increases. 
\textbf{(b)} The moment bounds in (a) are used to obtain bounds of three typical statistics: coefficient of variation, variance, and skewness. Insets: the gap between lower and upper bounds also decreases to zero. 
For each moment, we computed $30$ bounds (upper and lower, $d=3,\ldots,17$) for a solver time of $29$ seconds ($1$ second per bound).
Parameters: $k_1=6$, $k_2=1/3$, $k_3=50$, $k_4=3$, which correspond to a unimodal stationary solution.
}   
\label{fig:schomom}
\end{figure*}
The semidefinite programming scheme in the previous section allows  us to compute bounds of the stationary moments of the CME by constructing and optimising over outer approximations of the set of stationary moments. In this section, we go further and introduce a linear programming scheme that yields bounds on the \emph{full} stationary solutions of the CME, their marginals and their averages by constructing and optimising over outer approximations of the set of stationary solutions of the CME. To do so, we use a moment bound obtained in the previous section.
Note that 
the approximations in this section only involve linear inequalities (instead of semidefinite ones). Hence the optimisations to be solved are linear programs (instead of SDPs), a simpler subclass of convex optimisation problems for which mature, industrial solvers~\cite{CPLEX} are available. 

As for the SDP scheme above, we introduce the mathematical approach through a simple semi-analytic example (i.e., birth-death processes) in Sec.~\ref{sec:bd}, and then present the computational framework for general CMEs in Sec.~\ref{lpsec}. Readers interested in the computational implementation (and not the mathematical background) should skip Sec.~\ref{sec:bd} and go directly to Sec.~\ref{lpsec}. 
\subsection{A simple example: bounding the stationary solution of birth-death processes}\label{sec:bd}
Birth-death processes are one-species reaction networks ($n=1$)  
with state space $\s=\n$ whose value $x \in \n$ changes by $\pm1$ in each reaction:
\begin{equation}
\label{eq:one_step}
\varnothing \xrightarrow{a_+}S\xrightarrow{a_-}\varnothing. 
\end{equation}
The specific birth-death process is defined by the functional form of the given propensities $a_+(x)$ and $a_-(x)$.

The stationary equations $\pi Q=0$ then read
\begin{align}
\label{eq:introeqs1} 
&a_-(1) \pi(1)-a_+(0)\pi(0)=0,\\ &a_-(x+1)\pi(x+1)-(a_+(x)+a_-(x))\pi(x)\nonumber
\\&  \quad +a_+(x-1)\pi(x-1)=0,   \quad \qquad  x \geq 1. \label{eq:introeqs2}
\end{align}
Assuming non-vanishing death rates
\begin{equation}
\label{eq:drpos}
a_-(x)>0, \qquad  \forall x \geq 1,
\end{equation} 
it is well known that the unique stationary solution is:\cite{Gardiner2009}
\begin{align}
\label{eq:1deg}
&\pi(x) = \left [ \prod_{z=1}^x\frac{a_+(z-1)}{a_-(z)} \right ] \pi(0) =: \gamma(x)  \, \pi(0), \quad \forall x  \geq 0,
\end{align}
with $\pi(0)$ given by the normalisation condition: 
\begin{align}
 \label{eq:normalising}
&\pi(0) = \frac{1}{\sum_{x=0}^\infty \gamma(x)} =: \frac{1}{\gamma(\s)},
\end{align}
where we have introduced the notation for sums over sets:
\begin{align}
\label{eq:sum_notation}
\sum_{x \in \s} \gamma(x) =: \gamma(\s).     
\end{align}
Hence birth-death processes have at most one stationary solution, which exists if and only if $\gamma(\s)$ is finite.

\subsubsection{Semi-analytical approach for bounds and approximation}

For most birth-death processes, no closed-form expression for 
$\gamma(\s)$ is known and, consequently, the stationary solution cannot be computed exactly. However, we can obtain
upper and lower bounds for the distribution, as follows.

Let us consider a \emph{state space truncation}
\begin{equation}
\label{eq:momtrunc}
\s_r:=\{x\in\n:x^\alpha<r\}=\{0,1,\dots,\lceil r^{1/\alpha}\rceil-1\},
\end{equation}
with size $\mmag{\s_r}=\lceil r^{1/\alpha}\rceil$ controlled by the parameters $\alpha \in \zp$ and $r > 0$. 
Let $\s_r^c$ denote its complement, i.e., the set of states outside of the truncation $\s_r$.

Let us assume that we have available an upper bound on the stationary $\alpha$-moment:
\begin{align}
\label{eq:sdp_bound}
\ave{x^\alpha}\leq c.      
\end{align}
Note that for rational propensities, such a bound can be computed with the SDP scheme in~Sec.~\ref{bmom}.

\paragraph*{Upper bound:}
An easy upper bound on $\pi(0)$ is obtained by
truncating the sum in \eqref{eq:normalising} to get:
$$ \frac{1}{\sum_{x\in\s_r} \gamma(x)} \geq \frac{1}{\sum_{x=0}^\infty \gamma(x)} = \pi(0),$$ 
whence it follows that
\begin{align} 
& \pi(x)\leq \frac{\gamma(x)}{\gamma(\s_r)}=:u^r_x \quad \forall x\in\s_r.
  \label{eq:upperbounds}
\end{align}

\paragraph*{Lower bound:} %
Using~\eqref{eq:sdp_bound}, we obtain a bound on the probability mass $m_r$ outside of the truncation:
\begin{align}
 m_r :&= \sum_{x\not\in\s_r} \pi(x)\leq  \frac{1}{r}\sum_{x\not\in\s_r} x^\alpha\pi(x)\leq  \frac{\ave{x^\alpha}}{r}\leq \frac{c}{r}=:\varepsilon_r.
 \label{eq:Markov1}
\end{align}
We say that $\varepsilon_r$ is a \emph{tail bound}. 

A lower bound on $\pi(0)$ then follows from \eqref{eq:1deg}--\eqref{eq:Markov1}:
\begin{align}
\pi(0)=& \frac{1-m_r}{\sum_{x\in\s_r}\gamma(x)}\geq \frac{1- \varepsilon_r}
{\gamma(\s_r)}
\end{align}
whence we obtain a lower bound for $\pi(x)$:
\begin{align}
& \pi(x)\geq u^r_x (1-\varepsilon_r)=:l^r_x, \quad \forall x\in\s_r
\label{eq:lowerbounds}
\end{align}

\paragraph*{Convergent bounds:} 
We have thus shown that
\begin{align}
l^r_x  \leq \pi(x) \leq u^r_x, \quad  \forall x \in\s_r,
\label{eq:upper_lower_bounds}
\end{align}
and it is easy to see that
both bounds converge to the stationary solution as the size of the truncation grows: 
as $r \to \infty$, both $u^r_x \to \pi(x)$ and $l^r_x \to \pi(x)$.
This follows from \eqref{eq:1deg}--\eqref{eq:normalising} and $\varepsilon_r \to 0$. 

\paragraph*{\textbf{Approximating the distribution and the approximation error:}\\} 

Motivated by these facts, we define the two following measures (lower and upper bounds padded with zeros),
\begin{align}
\label{eq:approx_lower}
l^r :=(l^r(x))_{x\in\s}, \quad l^r(x)&:=\left\{\begin{array}{ll}l^r_{x}&\text{if }x \in\s_r\\0&\text{if }x \not\in\s_r\end{array}\right. \\ 
u^r :=(u^r(x))_{x\in\s}, \quad u^r(x)&:=\left\{\begin{array}{ll}u^r_{x}&\text{if }x \in\s_r\\0&\text{if }x \not\in\s_r\end{array}\right.
\label{eq:approx_upper}
\end{align}
and introduce them as \emph{approximations} for $\pi(x)$: 
$$\widetilde{\pi} \simeq \pi, \text{ where } \widetilde{\pi}=l^r \text{ or } \widetilde{\pi} = u^r.$$

We quantify the \emph{approximation error} of $\widetilde{\pi}$ with the total variation norm: 
\begin{equation}
\label{eq:tvnorm} 
\norm{\pi-\widetilde{\pi}}=\sup_{A\subseteq\s}\mmag{\pi(A)-\widetilde{\pi}(A)},
\end{equation}
where $\pi(A)$ and $\widetilde{\pi}(A)$ are sums over sets, defined~in~\eqref{eq:sum_notation}.

For $u^r$ and $l^r$, the approximation error can be chararacterised further. Using~\eqref{eq:upperbounds}--\eqref{eq:approx_upper}, we have:
\begin{align}
\norm{\pi-l^r}&=
\pi(\s)-l^r(\s) =1-\sum_{x\in\s_r}l^r_x\label{eq:low}\\
&=1-(1-\varepsilon_r)\sum_{x\in\s_r}u^r_x=\varepsilon_r,\nonumber\\
\norm{\pi-u^r}&=\max\left\{u^r(\s_r)-\pi(\s_r),\pi(\s_r^c)\right\}\label{eq:up}\\
&=\max\{1-(1-m_r),m_r\}=m_r\leq\varepsilon_r. \nonumber
\end{align}
Since $\varepsilon_r=c/r \to 0$ as $r\to \infty$, it thus follows that both 
$l^r$ and $u^r$ converge in total variation to $\pi$.

We summarise these findings in the following theorem: 
\begin{theorem}[Bounds and approximations of the stationary solution of birth-death processes]
\label{birthdeathth}
Consider any birth-death process~\eqref{eq:one_step} with non-vanishing decay rates~\eqref{eq:drpos} and finite sum $\gamma(\s)$~\eqref{eq:normalising}, such that it has a unique stationary solution $\pi$~\eqref{eq:1deg}. 
Suppose that $\pi$ satisfies the moment bound~\eqref{eq:sdp_bound} and let $\s_r \subseteq \s$ be the truncation~\eqref{eq:momtrunc} of the state space 
controlled by the parameters $r, \alpha \in \zp$, with tail bound~
$m_r = \pi(\s_r^c)\leq c/r=\varepsilon_r$.
Then the following hold:
\begin{enumerate}[label=(\roman*)]
\item \label{birtdeathth_i} The values of the distribution over the truncation are bounded above and below:
$$ l^r_x \leq \pi(x) \leq u^r_x, \quad  \forall x \in\s_r,$$
where $u^r_x=\gamma(x)/\gamma(\s_r)$ and $l^r_x=u^r_x (1-\varepsilon_r)$.
\item The measures 
$l^r =(l^r(x))_{x\in\s}$~and~$u^r =(u^r(x))_{x\in\s}$ defined in~\eqref{eq:approx_lower}--\eqref{eq:approx_upper} 
approximate the solution with approximation errors:
$$ \norm{\pi-l^r} = \varepsilon_r \text{ and } \norm{\pi-u^r} = m_r.$$ 
\item
The bounds vary monotonically with $r$:
$$l^r_x\leq l^{r+1}_x\leq \dots\leq\pi(x)\leq \dots \leq u^{r+1}_x \leq u^r_x, \quad  \forall x \in \s_r$$
and the sequences of approximations 
converge in total variation to $\pi$:
$$ \lim_{r \to \infty} \norm{\pi-l^r} = \lim_{r \to \infty} \norm{\pi-u^r} = 0.$$
\end{enumerate}
\end{theorem}
\begin{proof}This follows from~\eqref{eq:upperbounds}--\eqref{eq:up} and Corollary~\ref{boundcor}.
\end{proof}

\begin{remark}
If Assumption~\ref{ass:1} holds, then $\pi$ satisfies the moment bound \eqref{eq:sdp_bound} with  $c = U^{d}_\alpha$, where  $\alpha\in\{1,\dots,d-d_s\}$ and $U^{d}_\alpha$ is defined in \eqref{eq:sdp2}.
\end{remark}

\begin{figure}
	\begin{center}
	\includegraphics[width=0.48\textwidth]{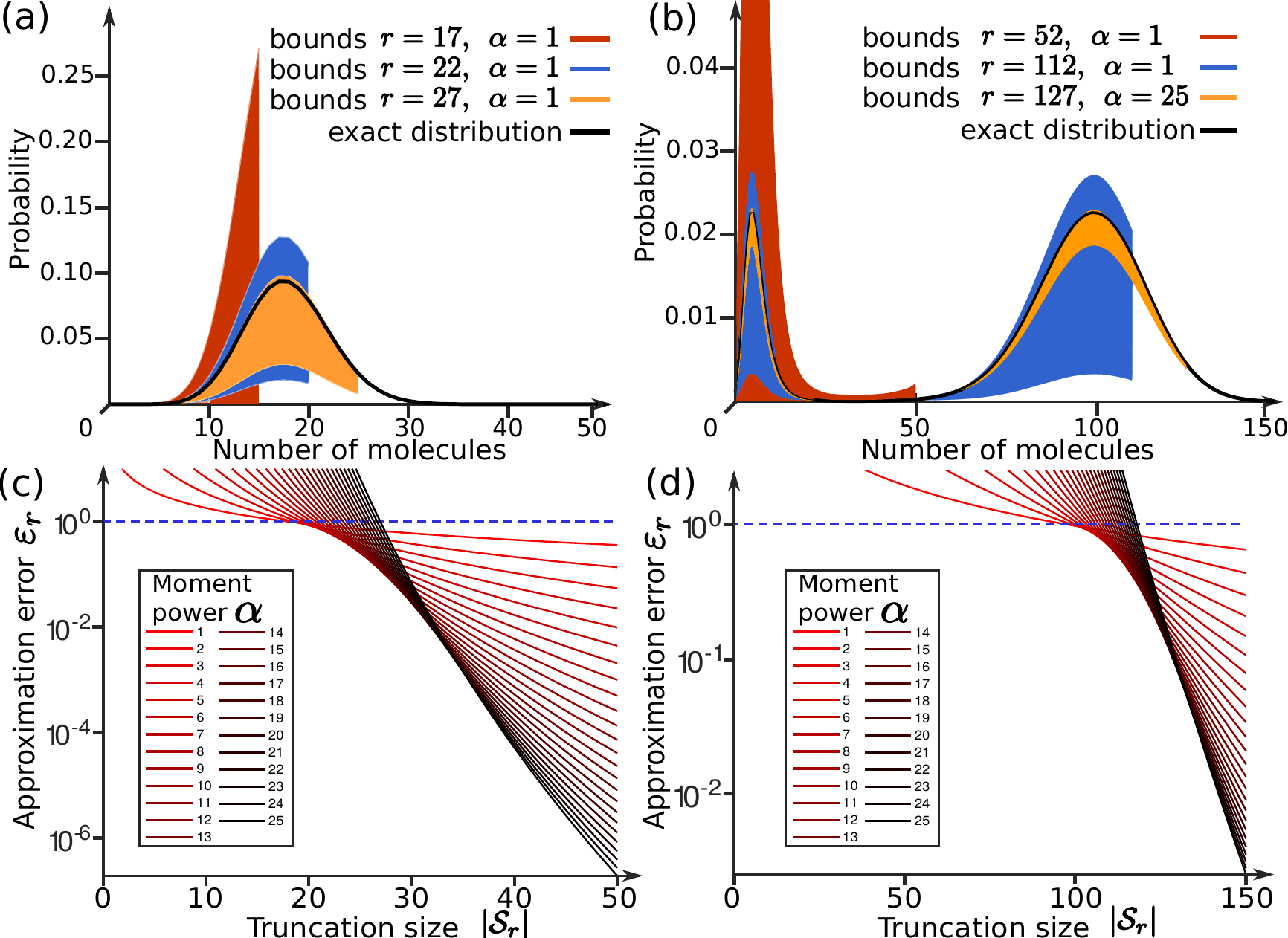}
	\vspace{-20pt}
	\end{center}
\caption{\textbf{Bounding the stationary solution of Schl\"ogl's model~\eqref{eq:smdl}.}
The stationary solution of~\eqref{eq:smdl} can be unimodal (a) or bimodal (b) depending on the parameters.
\textbf{(a)} Shadings show the tightening gap $(l_x^r,u_x^r)$ between upper and lower bounds on the stationary solution for truncations $\s_r$ of increasing size ($\mmag{\s_r}=\lceil r^{1/\alpha}\rceil$). 
We use $m_r \leq U^{25}_\alpha/r$ with $U^{25}_1 =17.5$ computed as in Sec.~\ref{bmom} (solver time = $7$ seconds).
The exact solution, given by~(\ref{eq:1deg})~and~(\ref{eq:schloegl_p0}), is shown for comparison (black line). 
\textbf{(b)} Same as (a) but for the  bimodal case, 
with tail bounds computed using
$U^{25}_{1}=98.0, U^{25}_{25}=6.37 \times 10^{51}$ (solver time = $8$ and $7$ seconds, respectively). 
\textbf{(c)}  The approximation error  of the lower bound approximation, $\varepsilon_r$, 
for the unimodal case decreases as $\mmag{\s_r}$ increases, shown here
for various values of $\alpha$. 
\textbf{(d)} Same as (c) but for the bimodal case. 
Note that only when the truncation includes enough states (different values of $\mmag{\s_r}$ for \textbf{(c)} and \textbf{(d)}), does the error fall below the dashed line
$\varepsilon_r < 1$, so that the bounds provide information about the stationary solution. For the bimodal case, the error indicates the presence of a second mode outside of the truncation when $\mmag{\s_r}$ is too small. 
Parameters: (a): $k_1=6$, $k_2=1/3$, $k_3=50$, $k_4=3$;
(b): $k_1=1/9$, $k_2=1/1215$, $k_3=27/2$, $k_4=59/20$.
} 
\label{fig:schodist}
\end{figure}
\noindent \paragraph*{\textbf{An application of Theorem~\ref{birthdeathth}: 
Schl\"ogl's model} \\}
To illustrate our results, we apply Theorem~\ref{birthdeathth} to compute bounds on the unique stationary solution of Schl\"ogl's model~\eqref{eq:smdl}--\eqref{eq:schloegl_props2}.
This model is a birth-death process for which an explicit analytical stationary solution can be obtained, thus allowing us to test the results directly without any simulations.

Through some analytical manipulations, the solution of Schl\"ogl's model 
can be obtained explicitly in terms of:
\begin{align}
 \label{eq:schloegl_p0}
 \frac{1}{\pi(0)}= {_2F_2}\left(-\frac{c_1+1}{2},\frac{c_1-1}{2};-\frac{c_2+1}{2},\frac{c_2-1}{2};\frac{k_1}{k_2}\right),
\end{align}
where ${_2F_2}$ denotes the generalised hypergeometric function; $c_1:=\sqrt{1-4 {k_3}/{k_1}}$; and $c_2:= \sqrt{1-4 {k_4}/{k_2}}$.
The stationary solution goes from being unimodal (black line, Fig.~\ref{fig:schodist}(a)) to bimodal (black line, Fig.~\ref{fig:schodist}(b)) depending on the parameter values, analogously to a bifurcation.

In Fig.~\ref{fig:schodist}(a)--(b), we compare the approximations $l^r$ and $u^r$ given in Theorem \ref{birthdeathth} (colour shades) to the analytical solution (black lines) of the unimodal and bimodal cases. 
As the size of the truncation (controlled by the parameter $r$) is increased, the bounds tighten around the analytical solution.
In Fig.~\ref{fig:schodist}(c)--(d) we show that the approximation error tends to zero as the size of the truncation 
$\mmag{\s_r}=\lceil r^{1/\alpha}\rceil$  is increased. 
In the unimodal case (Fig.~\ref{fig:schodist}(c)), the approximation error decreases rapidly when $\mmag{\s_r}$ is larger than the mode. Furthermore, when the truncation is sufficiently large, employing bounds on higher order moments (larger $\alpha$) provides tighter tail bounds and smaller approximation errors. On the other hand, if the size of the truncation is smaller than the mode, using higher order moments does not necessarily improve the approximation error. A similar dependence of the approximation error is observed in the bimodal case (Fig.~\ref{fig:schodist}(d)), but the approximation error only decreases when the truncation size is larger than the second (larger) mode. This example shows how the ability to compute error bounds can reveal the presence of modes outside of the truncation.

\subsubsection{Reformulation of the bounds as optimisations} 
\label{sec:optim_birth_death}

The truncation method leading to Theorem~\ref{birthdeathth} relies on the detailed balanced structure of birth-death processes. However, this semi-analytic method is not generalisable to arbitrary reaction networks.
Instead, the bounds can be reformulated as an equivalent (and generalisable) optimisation problem, as follows.

Consider a truncation $\s_r$~\eqref{eq:momtrunc} controlled by the parameters $\alpha, r \in \zp$ with tail bound~$m_r = \pi(\s_r^c)\leq c/r=\varepsilon_r$.

\begin{definition}[Restriction of $\pi$ to $\s_r$]
The \emph{restriction} of $\pi$ to $\s_r$ is: 
\begin{equation}
\label{eq:restr}
\pi_{|r}(x) :=\left\{\begin{array}{ll}\pi(x)&\text{if }x \in\s_r\\0&\text{if }x \not\in\s_r\end{array}\right.\qquad \forall x\in\s.
\end{equation}
\end{definition}

The restriction $\pi_{|r}$ belongs to the convex polytope: 
\begin{equation}
\label{eq:polytopesch}
\cal{P}^r =
\left\{\pi^r\in\ell^1: 
\begin{array}{l}
\pi^r \,Q(x) =0,   \, \forall x\in\cal{N}_r\\
\pi^r(\s_r^c)=0  \\
\pi^r \geq 0 \\
1-\varepsilon_r\leq \pi^r(\s)\leq 1 \\
\aver{x^\alpha}\leq c
\end{array}
\right\},
\end{equation}
where $\cal{N}_r:=\{0,1,\dots,\lceil r^{1/\alpha}\rceil-1\}$, 
which follows directly from the definition of the restriction~\eqref{eq:restr};
the tail bound~\eqref{eq:Markov1}; and
the fact that the stationary equations~\eqref{eq:introeqs1}--\eqref{eq:introeqs2} with $x<\lceil r^{1/\alpha}\rceil-1$ only involve states inside of the truncation. 

From the definition of the polytope~\eqref{eq:polytopesch}, we can show that the bounds of the stationary solution in Theorem~\ref{birthdeathth}~\ref{birtdeathth_i} are recovered by optimising over $\cal{P}^r$, as stated in the following theorem.
\begin{theorem}[Bounds and LP formulation] 
The bounds $l^r$ and $u^r$ in \eqref{eq:approx_lower}--\eqref{eq:approx_upper} are obtained by optimising over the polytope~$\cal{P}^r$: 
\begin{align*}
l^r(x)&=\inf\{\pi^r(x):\pi^r\in \cal{P}^r \} \\ 
u^r(x)&=\sup\{\pi^r(x):\pi^r\in \cal{P}^r \} \quad\forall x\in\s.
\end{align*}
\end{theorem}

\begin{proof} We only present the argument for the upper bounds---the proof for the lower bounds is analogous. If $x\not\in\s_r$, the result is trivial. 
A distribution $\pi^r$ satisfies 
\eqref{eq:polytopesch} if and only if $\pi^r(x)=\gamma(x)\pi^r(0), \forall x\in\s_r$, where $\gamma(x)$ is given in~\eqref{eq:1deg}.
Therefore, we have
\begin{align}
\sup\{\pi^r(x): \pi^r & \in\cal{P}^r\}
=\gamma(x)(\sup\{\pi^r(0):\pi^r\in\cal{P}^r\}) \nonumber \\
\label{eq:sup_above}
&\leq \frac{\gamma(x)}{\gamma(\s_r)}=u^r(x), \enskip \forall x\in\s_r,
\end{align}
which follows from 
$\pi^r(0)=\pi^r(\s_r)/\gamma(\s_r) \leq 1/\gamma(\s_r).$
However, $u^r$ clearly satisfies all constraints in~\eqref{eq:polytopesch}, including the moment constraint:
\begin{align*}
 c \ge \langle x^\alpha\rangle_\pi &= \frac{\gamma(\s_r)}{\gamma(\s)}\sum_{x\in\s_r}\frac{\gamma(x)}{\gamma(\s_r)}x^\alpha+\frac{\gamma(\s_r^c)}{\gamma(\s)}\sum_{x\in\s_r^c}\frac{\gamma(x)}{\gamma(\s_r^c)}x^\alpha \notag\\
 &>\frac{\gamma(\s_r)}{\gamma(\s)}\langle x^\alpha\rangle_{u^r}+\frac{\gamma(\s_r^c)}{\gamma(\s)}\langle x^\alpha\rangle_{u^r}=\langle x^\alpha\rangle_{u^r} 
\end{align*}
which follows from $\pi(x)=\gamma(x)/\gamma(\s)$ and the inequality
\begin{align*}
\sum_{x\in\s_r^c}\frac{\gamma(x)}{\gamma(\s_r^c)}x^\alpha& \geq 
\sum_{x \in\s_r^c}\frac{\gamma(x)}{\gamma(\s_r^c)}r=\sum_{x\in\s_r}\frac{\gamma(x)}{\gamma(\s_r)}r
\\
&>\sum_{x\in\s_r}\frac{\gamma(x)}{\gamma(\s_r)}x^\alpha=\langle x^\alpha\rangle_{u^r}.
\end{align*}
Hence $u^r \in \cal{P}^r$ and together with~\eqref{eq:sup_above}, this completes the proof.
\end{proof}

Importantly, the definition~\eqref{eq:polytopesch} only involves linear equations and inequalities. Therefore optimising over the polytope consists of solving a \emph{linear program}, a class of optimisations for which there exist powerful computational platforms and algorithms. 
Furthermore, this optimisation reformulation can be extended seamlessly to arbitrary networks, as expanded in the next section.

\subsection{Generalisation to arbitrary networks  via linear programming} \label{lpsec}

We now generalise the optimisation approach to obtain bounds and approximations with controlled errors of the stationary solutions of arbitrary reaction networks.

Let us consider a reaction network \eqref{eq:network} with state space $\s$, rate matrix $Q$ satisfying \eqref{eq:qmatrix}--\eqref{eq:qmatrix2}, and stationary solutions $\pi \in \ell^1$ that form the polytope $\cal{P}$~\eqref{eq:cmestat}.

To characterise the solutions of the CME, we choose a \emph{norm-like} function $w$, which plays the same role as the moment function ($x^\alpha$) in Section~\ref{sec:optim_birth_death}. 
\begin{definition}[Norm-like function]
\label{def:norm_like}
A function $w: \s \to \r$  is norm-like if it is non-negative  
\begin{align}
\label{eq:positive_w}
w(x) \geq 0, \, \forall x \in \s
\end{align}
and has finite sublevel sets:
\begin{equation}
\label{sublevel}
\s_r:=\{x\in\s:w(x)<r\}.
\end{equation}
\end{definition}
Furthermore, we require that the growth of $w$ be \emph{dominated}
by the stationary solution $\pi$, so that its expectation with respect to $\pi$ is finite. We summarise these requirements in the following checkable assumption.

\begin{assumption}[Existence of CME solutions and moment bound]
\label{ass:2} 
We assume that the CME has at least one stationary solution $\pi$, and that we have available a norm-like function $w$ with sublevel sets~$\s_r$ such that every stationary solution $\pi$ satisfies 
\begin{equation}
\label{eq:momboundw}
\ave{w} = \sum_{x \in \s} w(x) \pi(x) \leq c,
\end{equation}
where $c$ is a known constant.
This inequality can be thought of as a generalisation of~\eqref{eq:sdp_bound}, hence we refer to it as a moment bound.

The existence of the stationary solutions can be verified on a case by case basis using a Foster-Lyapunov criterion (e.g., Theorem~\ref{lyap} in App.~\ref{appendix2}).

Regarding the moment bound, in the case of networks with rational propensities satisfying Assumption~\ref{ass:1}, $w$ can be chosen to be any norm-like rational function with numerator of degree $d$ and the bounding constant $c$ can then be computed using the SDP approach of Sec.~\ref{bmom}.
For general reaction networks, the moment bound can be obtained using Foster-Lyapunov criteria~\cite{Glynn2008} (see Appendix~\ref{appendix2}).

\end{assumption}

In analogy with Lyapunov theory, the sublevel sets~\eqref{sublevel} of $w$ play an important role in characterising the stationary solutions of the CME. 
Specifically, we use the sublevel sets $\s_r$ as our state space truncations, noting that~\eqref{eq:momboundw} allows us to establish a bound on the mass of the tail of the distribution outside of $\s_r$:
\begin{equation}
\label{eq:tailboundw}
m_r:=\pi(\s^c_r) \leq\frac{1}{r}\sum_{x\not\in\s_r}w(x)\pi(x)
 \leq\frac{\ave{w}}{r}\leq \frac{c}{r}:= \varepsilon_r.
\end{equation}
Just as 
in the previous section, this choice yields a sequence of increasing truncations
that approach the entire state space: 
$$\s_1\subseteq\s_2\subseteq\dots,\quad\bigcup_{r=1}^\infty\s_r=\s.$$

For each truncation, let $\cal{N}_r$ denote the set of states $x \in \s_r$ that cannot be reached in a single jump from outside of the truncation: 
\begin{equation}
\label{eq:nr}
\cal{N}_r:= \left \{x\in\s_r: q(z,x)=0, \enskip \forall z \not\in \s_r \right \},
\end{equation}
and the associated convex polytope: 
\begin{equation}\label{eq:br}
\cal{P}^r:= 
\left\{\pi^r\in\ell^1: \begin{array}{l}
   \pi^r \, Q(x)=0, \enskip \forall x\in\cal{N}_r\\
   \pi^r(\s_r^c)=0 \\
   \pi^r \geq 0 \\
1-\varepsilon_r\leq \pi^r(\s)\leq 1\\
\aver{w}\leq c 
\end{array}\right\},
\end{equation}
which, analogously to~\eqref{eq:polytopesch}, includes all the stationary equations that only involve states in $\s_r$. 

We then have the following lemma:
\begin{lemma}[Outer approximations of $\cal{P}$]
\label{outer} Suppose that Assumption~\ref{ass:2} holds and let $\pi_{|r}$ be the restriction of $\pi$ to $\s_r$, as defined in  \eqref{eq:restr}.
If $\pi \in \cal{P}$, 
then $\pi_{|r} \in \cal{P}^r$. 
\end{lemma}
\begin{proof} This follows directly from \eqref{eq:tailboundw} and the fact that $\pi Q(x)=\pi_{|r} \, Q(x), \, \forall x \in \cal{N}_r$.
\end{proof}

The outer approximation property 
means that optimising over $\cal{P}^r$ provides convergent bounds on the averages of functions~$f$ on the state space, as summarised in the following theorem.

\begin{theorem}[Convergent bounds of stationary averages] 
\label{bounds}
Consider a reaction network \eqref{eq:network} with state space $\s$, rate matrix $Q$ satisfying \eqref{eq:qmatrixsrn}--\eqref{eq:qmatrix2}, 
and stationary solutions $\pi$ forming the set $\cal{P}$~\eqref{eq:cmestat} and suppose that Assumption~\ref{ass:2} holds.

If $\pi \in \cal{P}$ and $f:\s \mapsto \r$ is any real-valued function, then we can bound its averages over the restrictions: 
\begin{align}
\label{th:bounds_i}
    l^r_f\leq\averr{f}\leq u^r_f, \quad \forall r \in \zp
\end{align}
where $\pi_{|r}$ is the restriction of $\pi$ to $\s_r$ defined in~\eqref{eq:restr} 
and the bounds are given by:
\begin{align}
\label{eq:lowupthe4}
l^r_f &:=\inf\{\aver{f}:\pi^r \in\cal{P}^r\} \notag\\ 
u^r_f &:=\sup\{\aver{f}:\pi^r \in\cal{P}^r\}.
\end{align}

If we have additional information on $f$, we have the following bounds on the full $\pi$-averages:
\begin{enumerate}[label=(\roman*)]
\item \label{th:bounds_ii} 
If $f(x) \geq 0, \, \forall x \not \in \s_r$, then $l^r_f\leq \ave{f}, \enskip  \forall r \in \zp$.
\item \label{th:bounds_iii}
If $f(x) \leq 0, \, \forall x \not \in \s_r$, then $\ave{f}\leq u^r_f, \enskip \forall r \in \zp$.
\item \label{th:bounds_iv}
If $\ave{\mmag{f}}<\infty$ (i.e., $f$ is $\pi$-integrable), then
\begin{equation}
\label{eq:general_error}l^r_f-c\left(\sup_{x\not\in\s_r}\frac{\mmag{f(x)}}{w(x)}\right)\leq \ave{f}\leq u^r_f+c\left(\sup_{x\not\in\s_r}\frac{\mmag{f(x)}}{w(x)}\right).
\end{equation}
\item \label{th:bounds_v}
If the growth of $f$ is stricly dominated by $w$ as the size $r$ of the truncations $\s_r$ increases, i.e.,
\begin{equation}
\label{eq:weakstar2}
\lim_{r\to\infty} \, \sup_{x \not\in\s_r}\frac{\mmag{f(x)}}{w(x)}=0, 
\end{equation}
then $f$ is $\pi$-integrable for all $\pi\in\cal{P}$
 and the sequences of bounds from below $(l^r_f)_{r\in\zp}$ and above $(u^r_f)_{r\in\zp}$~converge:
\begin{align}
&\lim_{r\to\infty}l^r_f=l_f:=\inf\{\ave{f}:\pi\in\cal{P}\},\notag\\
&\lim_{r\to\infty}u^r_f=u_f:=\sup\{\ave{f}:\pi\in\cal{P}\}.\label{eq:lowupthe5}\end{align}
\end{enumerate}
\end{theorem}
\begin{proof} 
Eq.~\eqref{th:bounds_i} follows directly from Lemma~\ref{outer}.

\textit{\ref{th:bounds_ii}}~and~\textit{\ref{th:bounds_iii}} follow 
from~\eqref{th:bounds_i}  and $\ave{f}=\averr{f}+\sum_{x\not\in\s_r}f(x)\pi(x)$. 

\textit{\ref{th:bounds_iv}} is a consequence of~\eqref{th:bounds_i}, the moment bound \eqref{eq:momboundw}, and the following inequality:
\begin{align}\sum_{x\not\in\s_r} \mmag{f(x)} \pi(x) &\leq \left(\sup_{x\not\in\s_r}\frac{\mmag{f(x)}}{w(x)}\right)\sum_{x\not\in\s_r}w(x)\pi(x)\nonumber\\
&\leq\left(\sup_{x\not\in\s_r}\frac{f(x)}{w(x)}\right)\ave{w}.\label{eq:f8a7ha3n3u9a}
\end{align}

\textit{\ref{th:bounds_v}} has two parts. First, the $\pi$-integrability of $f$ follows from 
\begin{align*}
\ave{\mmag{f}}=\averr{\mmag{f}}+\sum_{x\not\in\s_r}\mmag{f(x)}\pi(x)\\
\leq \averr{\mmag{f}}+\left(\sup_{x\not\in\s_r}\frac{f(x)}{w(x)}\right)\ave{w}<\infty.
\end{align*}
Second, the convergence of the bounds follows from the fact that every subsequence of $(l^r_f)_{r\in\zp}$ has a converging subsequence $(l^{r_k}_f)_{k\in\zp}$ with limit $\pi(f)$, where $\pi \in \cal{P}$ (see Remark~\ref{size_LPs} and Theorem~3.5 in Ref.~\cite{Kuntz2018a}). By definition, $l_f\leq \pi(f)$, hence taking limits in \eqref{eq:general_error} shows that $l_f\geq \pi(f)$ and the result follows. The case of the upper bounds is identical. For details, see Corollary~3.6$(i)$ in Ref.~\cite{Kuntz2018a}.
\end{proof}

\begin{remark}[LP computation]
\label{size_LPs}
The bounds $l^r_f$ and $u^r_f$ in~\eqref{eq:lowupthe4} are obtained by solving two linear programmes (LPs) with $\mmag{\s_r}$ variables, $\mmag{\cal{N}_r}$ equality constraints, and $\mmag{\s_r}+3$ inequality constraints. LP solvers return the optimal value $l^r_f$ (or $u^r_f$) and an optimal point $\pi^{*,r}$, such that $\langle f\rangle_{\pi^{*,r}}=l^r_f$ (or $\aver{f}=u^r_f$).
The optimal points exist because 
the LPs are optimisations of a continuous function over a compact non-empty subset of~$\r^{\mmag{\s_r}}$. 
\end{remark}

Theorem~\ref{bounds} provides a general framework to obtain bounds~\eqref{eq:lowupthe4} that can be used as approximations of stationary averages $\ave{f}$ with a quantifiable error given by~\ref{th:bounds_ii}--\ref{th:bounds_iv}; furthermore, under the conditions in~\ref{th:bounds_v}, the approximations converge to $\ave{f}$ as the truncations approach the entire state space $\s$ of the reaction network.

\subsubsection{The case of a unique distribution: bounds and approximations}
\label{sec:distributions}

Throughout this section, we assume that the CME has a \emph{unique} stationary solution $\pi$, i.e., $$\cal{P}=\{\pi\}.$$ 
In this case, the results of Theorem~\ref{bounds} can be strengthened.  and
In particular, the feasible points $\pi^r \in \cal{P}^r$ are good approximations of the stationary solution in the sense that they converge to $\pi$ in \emph{weak$^*$}, as detailed in the following corollary. 

\begin{corollary}[Convergence of bounds and feasible points for a unique solution] 
\label{optimalpointsconv}
Let us assume that the conditions of Theorem~\ref{bounds}\ref{th:bounds_v} hold and, in addition, 
that $\cal{P}=\{\pi\}$ consists of a single stationary solution $\pi$.
Then the upper and lower bounds~\eqref{eq:lowupthe4} converge to the average: 
\begin{align}
\label{eq:convergence_to_unique}
\lim_{r\to\infty}l^r_f=\lim_{r\to\infty}u^r_f=\ave{f},
\end{align}
and any sequence of feasible points  $(\pi^r)_{r\in\zp}$ 
belonging to the outer approximations $(\cal{P}^r)_{r\in\zp}$ (i.e., $\pi^r\in\cal{P}_r$, $\forall r\in\zp$)
converges to $\pi$ in weak$^*$:
\begin{equation}
\label{eq:weakstar}
\lim_{r\to\infty}\aver{g}= 
\langle g\rangle_{\pi}
\end{equation}
for any function $g$ that satisfies~\eqref{eq:weakstar2}.
\end{corollary}
\begin{proof}This proof is similar to that of Theorem~\ref{bounds}\textit{\ref{th:bounds_v}}. For full details, see Corollary~3.6(iii) in Ref.~\cite{Kuntz2018a}.
\end{proof}

\begin{remark}
When $w$ is norm-like, convergence in weak* implies convergence in total variation---see Appendix~B of Ref.~\cite{Kuntz2018a} for a proof.
\end{remark}

\begin{remark}
Corollary~\ref{optimalpointsconv} shows that,
given a sufficiently large truncation, 
the feasible points $\pi^r$ 
provide arbitrarily accurate approximations
to $\pi$, yet with no quantifiable bound on the approximation error 
$||\pi -\pi^r||$.
\end{remark}

\paragraph*{\textbf{Bounding and approximating a unique stationary solution with quantifiable error:}\\} 

Corollary~\ref{optimalpointsconv} can still be used to obtain approximations with quantified errors of the distribution $\pi$ itself. 
Specifically, by applying~\eqref{eq:convergence_to_unique} repeatedly with specific $f$s, the indicator function at each point of the truncation. This allows us to compute bounds on each value of the distribution $\pi(x)$ in $\s_r$.

Given a truncation $\s_r$, let us define the set of indicator functions $\{1_x\}_{x \in \s_r}$, one for every state in the truncation, where each $1_x$ is defined as \eqref{eq:indicator}.
For each function $1_x$ in the set (i.e., for each state in the truncation), we compute the bounds~\eqref{th:bounds_i} by solving the LPs~\eqref{eq:lowupthe4} with $f=1_x$. Hence we obtain lower and upper bounds on~$\averr{1_x} = \pi(x)$,  for all states in the truncation: 
\begin{align}
& l^r_x  \leq  \pi(x) \leq u^r_x, \quad \forall x \in \s_r \\ 
\text{where} \quad
l^r_x & = l^r_{1_x}= \inf\{\pi^r(x):\pi^r\in\cal{P}^r\}\label{eq:dn82n2y3rn8qrn1} \\
u^r_x & = u^r_{1_x}= \sup\{\pi^r(x):\pi^r\in\cal{P}^r\}.\label{eq:dn82n2y3rn8qrn2}
\end{align}
As in~\eqref{eq:approx_lower}--\eqref{eq:approx_upper}, we then collect these bounds,
pad them with zeros, and define two approximations for $\pi$:  
\begin{align}
\label{eq:lowerd}
l^r :=(l^r(x))_{x\in\s}, \quad l^r(x)&:=\left\{\begin{array}{ll}l^r_{x}&\text{if }x \in\s_r\\0&\text{if }x \not\in\s_r\end{array}\right. \\ 
u^r :=(u^r(x))_{x\in\s}, \quad u^r(x)&:=\left\{\begin{array}{ll}u^r_{x}&\text{if }x \in\s_r\\0&\text{if }x \not\in\s_r\end{array}\right.
\label{eq:upperd}
\end{align}
These approximations of $\pi$ have controlled errors, as summarised in the  following corollary.

\begin{corollary}[Upper and lower bounding approximations of the unique solution]
\label{compstatcc}
If Assumption~\ref{ass:2} holds and $\cal{P}=\{\pi\}$, then the approximations $l^r$~\eqref{eq:lowerd} and $u^r$~\eqref{eq:upperd} 
fulfill the following: 
\begin{enumerate}[label=(\roman*)]
\item \label{compstatcc_i}
The approximations $l^r$ and $u^r$ 
bound $\pi$ from below and above, respectively,
\begin{align}
\label{eq:bound_pi_low}
    l^r(x) \leq & \pi(x)  \qquad \quad \quad \quad 
    \forall x \in \s   \\
 &\pi(x)  \leq u^r(x)     \quad \enskip  \forall x \in \s_r 
 \label{eq:bound_pi_up}
\end{align}
with quantified approximation errors $\varepsilon^l_r$ and $\varepsilon^u_r$:
\begin{align}
\norm{l^r-\pi}&=1-l^r(\s_r)=:\varepsilon^l_r,
\label{eq:epl}\\
\norm{u^r-\pi}&=\max\{u^r(\s_r)-1+m_r,m_r\}\nonumber\\
&\leq \max\{u^r(\s_r)-1+c/r,c/r\}=:\varepsilon^u_r,
\label{eq:epu}
\end{align}
where $\norm{\cdot}$ denotes the total variation norm~\eqref{eq:tvnorm}.

\item \label{compstatcc_ii}
As the truncation size $r$ approaches infinity (and $\s_r$ approaches $\s$), the approximation $u^r$ converges pointwise to the unique~$\pi$,
\begin{align}
\label{eq:converge_pointwise}
\lim_{r\to\infty}u^r(x)=\pi(x),\qquad\forall x\in\s,
\end{align}
and the approximation $l^r$ converges to $\pi$ in weak$^*$~\eqref{eq:weakstar}
to~$\pi$. 
\end{enumerate}
\end{corollary}
\begin{proof}The bounds~\eqref{eq:bound_pi_low}--\eqref{eq:bound_pi_up} follow directly from Theorem~\ref{bounds}. The error~\eqref{eq:epl} follows from~\eqref{eq:bound_pi_low}--\eqref{eq:bound_pi_up} and the fact that the total variation norm of an unsigned measure is its mass. Similarly, \eqref{eq:epu} follows from~\eqref{eq:tailboundw} and
\begin{align*}&\mmag{u^r(A)-\pi(A)}
\leq \max\{u^r(A)-\pi_{|r}(A),\pi(A\cap\s_r^c)\}\\
&\leq \max\{u^r(\s_r)-1+m_r,m_r\}\\
&=\max\{\mmag{u^r(\s_r)-\pi(\s_r)},\mmag{u^r(\s_r^c)-\pi(\s_r^c)}\},\quad\forall A\subseteq\s.\end{align*}
Theorem~\ref{bounds}\ref{th:bounds_v} shows that $l^r$ and $u^r$ converge pointwise to $\pi$. For any $f$ satisfying \eqref{eq:weakstar2} and $r,r'\in\zp$, we have that
\begin{align*}
\mmag{\pi(f)-l^r(f)}&\leq\sum_{x\in\s_{r'}}\mmag{f(x)} \, \left(\pi(x)-l^r(x)\right)\\
&+2\sum_{x\not\in\s_{r'}}\mmag{f(x)} \, \pi(x).
\end{align*}
Using the pointwise convergence of $l^r$ and~\eqref{eq:f8a7ha3n3u9a},
we can pick $r'$ such that the second sum is arbitrarily small and, subsequently, an $r$ such that the first sum is arbitrarily small. Hence the weak$^\ast$ convergence of $l^r$ follows.
See Theorem~4.1 in Ref.~\cite{Kuntz2018a} for details.
\end{proof}

Corollary~\ref{compstatcc} states that, for sufficiently large $r$, $l^r$ and $u^r$ are close to $\pi$. In contrast with the feasible points $\pi^r$,  we can answer the question ``is $r$ sufficiently large?'' by evaluating the errors $\varepsilon^l_r$~\eqref{eq:epl} and $\varepsilon^u_r$~\eqref{eq:epu}.
Since 
$$ \lim_{r \to \infty} \varepsilon^l_r =0, $$ 
we will always find an approximation $l^r$ that verifiably meets any given error tolerance by increasing $r$. 

\begin{remark}
Although we have no proof that $\varepsilon^u_r$ converges to zero (nor that $u^r$ itself converges to $\pi$ in total variation), all the examples we have encountered in practice exhibit convergence of $u^r$ and $\varepsilon^u_r$.
\end{remark}

To instead answer the question `when is $r$ too small?' (i.e., to establish how large the partition $\s_r$ must be to guarantee a given approximation error), the following proposition is of use.
\begin{proposition}[Achievable approximation errors] 
\label{prop:achievable_errors}
Under the same conditions as in Corollary~\ref{compstatcc}, the errors of the approximations $l^r$~\eqref{eq:lowerd} and $u^r$~\eqref{eq:upperd} cannot be made smaller than the tail bound or the mass of the tail, respectively, i.e.,
\begin{align}
\label{eq:error_bound_lower}
\norm{l^r-\pi} & \geq \varepsilon_r, \quad &\forall r \in \zp \\
\norm{u^r-\pi} & \geq m_r, \quad &\forall r \in \zp,
\label{eq:error_bound_upper}
\end{align}
where $m_r=\pi(\s^c_r) \leq \varepsilon_r = c/r$.
\end{proposition}
\begin{proof} 
The inequality \eqref{eq:error_bound_upper} follows directly from \eqref{eq:epu}. 
For~\eqref{eq:error_bound_lower}, 
recall that there exists at least one optimal point $\pi^{*,r}$ such that $\pi^{*,r}(x)=l^r(x)$ for any $x\in\s_r$ (Remark~\ref{size_LPs}). 
It is straightforward to verify that 
$$\frac{1-\varepsilon_r}{\pi^{*,r}(\s_r)}\pi^{*,r} \in \cal{P}^r,$$
which implies $\pi^{*,r}(\s_r)=1-\varepsilon_r$ (due to the minimality of $\pi^{*,r}(x)$).  Since $l^r$ bounds from below all feasible points of $\cal{P}^r$, it follows that $l^r(\s_r)\leq \pi^{*,r}(\s_r)=1-\varepsilon_r$. Combined with \eqref{eq:epl}, this gives~\eqref{eq:error_bound_lower}. 
\end{proof}
In other words, the approximation error of the lower bounds is no smaller than the tail bound, whereas that of the upper bounds is no smaller than the tail mass. 

\begin{remark}
The inequalities~\eqref{eq:error_bound_lower}--\eqref{eq:error_bound_upper} are sharp for birth-death processes with~$w(x)=x^\alpha$ (see~\eqref{eq:low}--\eqref{eq:up}).  
\end{remark}

\paragraph*{\textbf{Approximating marginal distributions:}\\} 
\label{sec:mdist}

For high-dimensional state spaces, we are often interested in marginal distributions rather than the full multivariate solution $\pi$ defined on $\s$. 
A marginalisation is associated with a \emph{partition} of the state space into a  collection of disjoint subsets:
$$ \{A_i\}_{i\in\cal{I}}, \quad   \cup_{i\in\cal{I}} A_i=\s, 
\quad  A_i \cap A_j=\varnothing,\enskip \forall i \neq j \in \cal{I},
$$ 
and the \emph{marginal distribution} is defined with respect to each subset: 
\begin{equation}
\label{eq:marginal2}
\hat{\pi}(i)=\pi(A_i) = \sum_{x\in A_i}\pi(x),\qquad \forall i\in\cal{I}.
\end{equation}
Because $\{A_i\}_{i\in\cal{I}}$ is a partition of $\s$, $\hat{\pi}$ is a probability distribution on $\cal{I}$.

Typically, we are interested in marginalising the distribution of a reaction network with $n$ species (and state space $\s = \nn$) over a subset of species. For instance, if we are interested in the molecule counts of species $k$, we consider the following (infinite) set of subsets:
\begin{equation}\label{eq:kthm}
\{A_i\}_{i \in \n} \quad \text{where} \quad A_i:=\n^{k-1}\times\{i\}\times \n^{n-k},
\end{equation}
whose union trivially recovers the entire state space.
Associated with this set $\{A_i\}_{i\in\n}$ we then have the marginal distribution $\hat{\pi}$ 
$$\hat{\pi}(i)=\pi \left(\{x\in\nn:x_k=i\} \right), \qquad \forall i\in\n$$
which, in this case, corresponds to the (univariate) distribution describing the molecule counts of the $k^{th}$ species.

The marginal distribution $\hat{\pi}$ can also be bounded and approximated following a similar procedure to the one described above for the full distribution. Using the indicator functions $1_{A_i}$ as the functions $f$, we solve the analogous LPs:
\begin{align}
\label{eq:hat_low}
\hat{l}^r_i & = \inf\{\pi^r(A_i):\pi^r\in\cal{P}^r\}, \qquad \forall 
i \in \cal{I}_r \\
\label{eq:hat_up}
\hat{u}^r_i & = \sup\{\pi^r(A_i):\pi^r\in\cal{P}^r\}, \qquad \forall i \in \cal{I}_r,
\end{align}
for all the subsets $A_i$ that intersect with the truncation, i.e.,   $\cal{I}_r =\{i \in \cal{I} : A_i\cap\s_r \neq \varnothing \}$.

As before, we construct two approximations by padding~\eqref{eq:hat_low}--\eqref{eq:hat_up} with zeros:
\begin{align}
\label{eq:lowerdm}
\hat{l}^r :=(\hat{l}^r(i))_{i\in \cal{I}}, \quad \hat{l}^r(i)&:=
\left\{\begin{array}{ll} \hat{l}^r_{i}&\text{if } 
i \in \cal{I}_r
\\0&\text{if } i \not \in \cal{I}_r
\end{array}\right. \\ 
\hat{u}^r :=(\hat{u}^r(i))_{i\in \cal{I}}, \quad \hat{u}^r(i)&:=
\left\{\begin{array}{ll} \hat{u}^r_{i}&\text{if } 
i \in \cal{I}_r
\\0&\text{if } i \not \in \cal{I}_r
\end{array} \right. , 
\label{eq:upperdm}
\end{align}
which are the analogues for the marginal distribution of the approximations to the entire distribution~\eqref{eq:lowerd}--\eqref{eq:upperd}, and have similar (but not identical) properties, as summarised in the following two corollaries.

\begin{corollary}[Lower bounding approximation of the marginal distribution]
\label{compstatccm}

Let us assume that the conditions of Corollary~\ref{compstatcc} hold.
If  $\{A_i\}_{i\in\cal{I}}$ is a partition of $\s$,
then the associated marginal distribution $\hat{\pi}$~\eqref{eq:marginal2} is lower bounded by the approximation $\hat{l}^r$ defined in~\eqref{eq:lowerdm}: 
$$\hat{l}^r(i)\leq \hat{\pi}(i)\quad\forall i\in\cal{I},$$
with approximation error
\begin{equation}\label{eq:lower_upper_approx}||\hat{l}^r-\hat{\pi}||=1-\hat{l}^r(\cal{I}_r)=:\hat{\varepsilon}^l_r.\end{equation}
Furthermore, as the truncation size is increased ($r\to \infty$ and $\s_r$ approaches $\s$), $\hat{l}^r$ converges to $\hat{\pi}$ in total variation.
\end{corollary}
\begin{proof}This proof is analogous to that of Corollary \ref{compstatcc} except that one needs to use Corollary~4.3 in Ref.~\cite{Kuntz2018a} (with $g(x,y)=1_{A_y}(x)$ instead of Theorem 4.1 in Ref.~\cite{Kuntz2018a}. 
\end{proof}

\begin{corollary}[Convergent approximation of the marginal distribution]
\label{cor:upper_approximation}
Under the same conditions as in Corollary~\ref{compstatccm}, 
$\hat{u}^r$ defined in~\eqref{eq:upperdm} approximates the marginal distribution $\hat{\pi}$ with error bounded by
\begin{align}
\norm{\hat{u}^r-\hat{\pi}} &\leq \max\{\hat{u}^r(\cal{I}_r)-1+m_r,m_r\}
\nonumber\\
&\leq \max\{\hat{u}^r(\cal{I}_r)-1+c/r,c/r\}=:\hat{\varepsilon}^u_r.   
\label{eq:error_upper_approx}
\end{align}

Furthermore, as the truncation $\s_r$ approaches $\s$, 
$\hat{u}^r$ converges pointwise to $\hat{\pi}$:
$$\lim_{r\to\infty}\hat{u}^r(i)=\hat{\pi}(i),\qquad\forall i\in\cal{I}.$$ 
\end{corollary}

\begin{proof}
The proof is analogous to that of Corollary \ref{compstatcc}.
\end{proof}

Note that $\hat{u}^r$ does provide a controlled approximation of the marginal distribution, as it is a pointwise convergent approximation to $\hat{\pi}$ with a guaranteed, computable error bound $\hat{\varepsilon}^u_r$~\eqref{eq:error_upper_approx}.

\begin{remark}[Upper bounds for the marginal distribution]
\label{remark:bad_upper_bound}
The approximation $\hat{u}^r(i)$ bounds the marginal $\hat{\pi}(i)$ if and only if $A_i \cap \s_r^c = \varnothing$, i.e., when the set $A_i$ is fully contained inside the truncation $\s_r$. 
Hence $\hat{u}^r(i)$ does not provide an upper bound if the truncation does not include all the space of the marginalised variables.

However, using the fact that the probability mass of $A_i\cap\s_r^c$ is bounded by the mass of the tail $m_r$~\eqref{eq:tailboundw}, 
we have the following easy (but loose) upper bounds:
\begin{equation}
\label{eq:marbadbounds}
\hat{\pi}(i)\leq \hat{u}^r(i)+c/r\quad\forall i\in\cal{I}_r.\end{equation}
\end{remark}

\subsubsection{Non-uniqueness, ergodic distributions and a uniqueness~test 
}\label{nonuniquesec}\label{sec:nonunique}

Theorem \ref{bounds} shows that our LP optimisation over the polytopes $\cal{P}^r$ yields bounds on the stationary averages, even if there are multiple stationary solutions.
In the non-unique case, however, the gap between the lower bounds and the upper bounds will reflect the fact that the extreme points of $\pi\mapsto \ave{f}$ over $\cal{P}$ can be achieved by \emph{different} solutions in the polytope. 
Yet it is possible to characterise further the set of solutions and the extreme points in terms of the ergodic distributions of the CME, and use this description  
to turn our LP approach into a test of uniqueness, as we show below.

To see how multiple stationary solutions of the CME can arise, consider the simple reaction network
$$ \varnothing \xrightarrow{} 2S_1\xrightarrow{} \varnothing,
\qquad S_2\xrightarrow{}\varnothing,$$
with mass action kinetics.
It is clear that its state space $\s =\n^2$ decomposes into three disjoint sets:
\begin{align*}
\s= & \{(x_1,0):x_1\in\n\text{ is odd}\}
\cup \{(x_1,0): x_1\in\n\text{ is even}\} \\
& \cup \{(x_1,x_2): x_1\in\n, x_2\in\zp \} 
=: \cal{C}_1 \cup \cal{C}_2 \cup \cal{T},
\end{align*}
where $\cal{C}_1$ and $\cal{C}_2$ are \emph{closed communicating classes} 
and $\cal{T}$ contains the remaining states. 
A set $\cal{C}\subseteq\s$ 
is a closed communicating class~\cite{Norris1997} if the chain can transit between any pair of states in $\cal{C}$ but cannot leave $\cal{C}$.
Another common source of multiple closed communicating classes are conservation laws in reaction networks\cite{vallabhajosyula2005}. For instance, the reactions $$2S_1\xrightleftharpoons[]{}S_2,$$ 
conserve the quantity $n=x_1+2x_2$. Hence there exists a different closed communicating class for every $ n \in \n$.

The closed communicating classes are intimately related to the stationary solutions, as summarised in the following theorem that compiles some facts that are broadly known in the literature. 
\begin{theorem}[Ergodic distributions and communicating classes~\cite{Meyn1993a}]
\label{doeblinc} 
Consider a reaction network~\eqref{eq:network} with rate matrix $Q$ satisfying \eqref{eq:qmatrixsrn}--\eqref{eq:qmatrix2}, assume $Q$ is regular, and decompose the state space as 
\begin{equation}
\label{eq:ddc}
\s= \left(\cup_{j} \cal{C}_j \right) \cup \cal{T},
\end{equation}
where $\cal{C}_j$ are closed communicating classes and $\cal{T}$ contains the remaining states. 
\begin{enumerate}[label=(\roman*)]
\item \label{doeblinc_i}
For each $\cal{C}_j$, there is at most one stationary solution $\pi_j$; hence $\pi_j(\cal{C}_j)=1$. Whenever it exists, $\pi_j$ is known as the ergodic distribution associated with~$\cal{C}_j$.
\item  \label{doeblinc_ii}
Let $\cal{J}$ be the set of indexes $j$ of the 
ergodic distributions $\pi_j$. 
The set of stationary solutions $\cal{P}$~\eqref{eq:cmestat} is the set of convex combinations of the ergodic distributions:
\begin{equation}
\label{eq:statconv}
\cal{P}=\left\{\sum_{j\in\cal{J}}\theta_j\pi_j: \theta_j\geq0 \enskip \forall j\in\cal{J}, \enskip \sum_{j\in\cal{J}}\theta_j=1\right\}.
\end{equation}
\end{enumerate} 
\end{theorem}
\begin{proof} 
If $Q$ is regular, the stationary solutions of the CME are the stationary distributions of the chain (Theorem~\ref{Qstateq} in Appendix~\ref{appendix1}). Using this fact, $(i)$ can be found in most books on continuous-time chains (e.g., Ref.~\cite[Th.~3.5.2]{Norris1997}), and $(ii)$ is given in~Ref.\cite[Th.~3.4]{Meyn1993a}.
\end{proof}

Theorem~\ref{doeblinc}
states that the ergodic distributions $\pi_j$ are orthogonal to each other and that they are the extreme points of the convex polytope $\cal{P}$~\eqref{eq:cmestat} of stationary solutions of the CME.
Because $\cal{P}$ 
is contained in the non-negative orthant of $\ell^1$, 
it follows that each face of the non-negative orthant  
contains \emph{at most one} of the ergodic distributions.

Using this fact, we obtain a computational test of the uniqueness of stationary solutions, 
as summarised in the following corollary.
\begin{corollary}[A uniqueness test]\label{uniqueness-test}If Assumption~\ref{ass:2} holds and $Q$ is regular, then
$$\cal{P} = \{\pi\}  \iff \exists x \in \s, \enskip r \geq 1: l^r(x)>0.$$
\end{corollary}
\begin{proof} The proof is by contradiction. Suppose that $\cal{P}$ is not a singleton. 
Theorem~\ref{doeblinc}$(ii)$ implies that $\cal{P}$ contains two or more ergodic distributions, $\pi_j$, each associated with a different closed communicating class, $\cal{C}_j$.
Let us consider, e.g., a state $x \in \cal{C}_1$. Since the classes $\cal{C}_j$ are disjoint, then $\pi_j(x)=0, \forall j \neq 1$,
which contradicts the lower bound property of $l^r$ (Theorem~\ref{bounds}$(i)$). Hence, $\cal{P}$ must be a singleton. The converse follows from the convergence of the bounds in Corollary~\ref{compstatcc}~\textit{\ref{compstatcc_ii}}.
\end{proof}

In other words, if $\pi$ is unique, then the lower bound of any (and all) states in the support of $\pi$ is non-zero for sufficiently large $r$. Conversely, finding a single non-zero lower bound for any $x\in \s$ provides a proof of uniqueness of the distribution. Hence, if there is more than one ergodic distribution, all the lower bounds are zero for all states in the state space $\s$.

When $\pi$ is unique, $l^r$ or $u^r$ are good approximations of the stationary solution. However, this is not so in the non-unique case. 
Indeed, it is easy to show that, in the non-unique case, the lower and upper bounds are always loose: Corollary~\ref{uniqueness-test} shows that the lower bounds are trivially zero everywhere ($l^r =0$), whereas Theorems~\ref{bounds}$(ii)$ and~\ref{doeblinc} imply that for large $r$, the mass of $u^r$ will be no smaller than the number of ergodic distributions:
\begin{equation}\label{eq:upmasserg}\liminf_{r\to\infty}u^r(\s)\geq\mmag{\cal{J}},\end{equation}
hence the upper bound is not tight.

However, our LP framework can still be used to obtain approximations of the ergodic distributions by using sequences of feasible points $\pi^r$. To this end, we require the following generalisation of Corollary~\ref{optimalpointsconv}.

\begin{corollary}[Convergent approximations of ergodic distributions]
\label{optimalpointsconv2}
Let $f$ be any function that satisfies~\eqref{eq:weakstar2} (i.e., $f$ is dominated by the norm-like function $w$ as the size $r$ of the truncations increases), and let $(\pi^{*,r})_{r \in \zp}$ be a sequence of optimal points 
such that 
 \begin{align}
\pi^{*,r}\in\cal{P}^r \enskip \text{and} \enskip
\langle f\rangle_{\pi^{*,r}} & = u^r_f =\sup\{\aver{f}: \pi^r \in\cal{P}^r\},\notag
 \end{align}
for all $r\in\zp$.
\begin{enumerate}[label=(\roman*)]
\item  If there exists a unique point $\pi^* \in \cal{P}$ such that
\begin{align}
\langle f\rangle_{\pi^*} =\sup\{\ave{f}: \pi \in\cal{P}\}
\end{align}
then the sequence of optimal points $\pi^{*,r}$ 
converges to $\pi^*$ in weak$^*$ as $r \to \infty$. 
 \item If $f$ is the indicator function of a state (or of a subset) that is contained in a closed communicating class $\cal{C}_j$ with associated ergodic distribution $\pi_j$, 
 then the sequence of optimal points $(\pi^{*,r})_{r \in \zp}$ converges to $\pi_j$ in  weak* as $r\to \infty$. 
 \end{enumerate}
\end{corollary}

\begin{proof} The proof of $(i)$ is similar to that of Theorem~\ref{bounds}\textit{\ref{th:bounds_v}}. See Corollary~3.6$(ii)$ and Remark~3.7 in Ref.~\cite{Kuntz2018a} for details. $(ii)$ follows from $(i)$, Theorem~\ref{doeblinc}$\ref{doeblinc_ii}$, and the fact that indicator functions satisfy \eqref{eq:weakstar2}.
\end{proof} 

Corollary~\ref{optimalpointsconv2} provides a rationale for how to use our computational framework to obtain approximations of the ergodic distributions $\pi_j$ in the non-unique case. %
Importantly, we do not need to know \textit{a priori} what the closed communicating classes are. 
Using the indicator function for a chosen state $x$, we obtain the sequence of optimal points $\pi^r$ satisfying $\pi^r(x)=u^r(x)$. 
Should $x$ belong to a closed communicating class $\cal{C}_j$ with ergodic distribution $\pi_j$, Corollary \ref{optimalpointsconv2}$(ii)$ shows that $\pi^r$ will converge to $\pi_j$ as $r$ tends to infinity. Indeed, by looking at the states for which $\pi^r(x)>0$, we can in principle deduce which communicating class $\cal{C}_j$ (if any) the state belongs. Once the class is known, we replace $\s$ with $\cal{C}_j$ and proceed as 
for the unique case to obtain bounds on~$\pi_j$. See Section~\ref{sec:ergodic_example} for an example of the application of this procedure.

\subsubsection{Computational implementation and numerical considerations}\label{sec:lpcom}

Let us consider a given reaction network with rational propensities. 
In order to obtain approximations of its stationary solutions with controlled error smaller than a tolerance $\epsilon$, we proceed as follows:
\begin{enumerate}
\item Verify the existence of stationary solutions $\pi$ and the finiteness of their moments (Assumption~\ref{ass:1}) using a Foster-Lyapunov criterion (Theorem \ref{lyap} in App.~\ref{appendix2}).
    
\item \label{step_normlike}
Choose a norm-like rational function $w$ and define the truncations $\s_r$ as the sublevel sets~\eqref{sublevel} controlled by $r$.

 We have found it best to choose functions $w$
 that define truncations that cover most of the probability mass and that tend quickly to infinity, so that the size of the truncation grows slowly with $r$.
 For example, if we take $w(x)=x^\alpha$, then higher values of $\alpha$ induce smaller truncation sizes $\mmag{\s_r}\approx \sqrt[\alpha]{r}$. To guide the selection of $w$, one can run the scheme with various $w$ to gain information about the shape of the distribution.

\item Use the SDP approach of Sec.~\ref{sec:mommulti} to find a moment bound~\eqref{eq:momboundw} with bounding constant $c$ satisfied by all stationary solutions (Assumption~\ref{ass:2}). In particular, we employ YALMIP\cite{Lofberg2004}, SDPA-GMP\cite{Nakata2010}, and mpYALMIP\cite{Fantuzzi2016} to formulate and solve the SDP~\eqref{eq:sdp2} with $f:=w$. See details in Sec.~\ref{sec:sdpcomp}.

\item  \label{step_sizetruncation}
    Choose an initial truncation size $r$ based on the achievable errors established in Proposition~\ref{prop:achievable_errors}. To guarantee an error smaller than our tolerance $\epsilon$, we must choose an initial $r > c/\epsilon$. 
    
\item \label{step_LP} Solve the LPs~\eqref{eq:dn82n2y3rn8qrn1} to obtain the upper and lower approximations $l_r$~\eqref{eq:lowerd} and $u_r$~\eqref{eq:upperd}. Here we use the dual simplex algorithm of CPLEX V12.6.3\cite{CPLEX} to solve the LPs. The tool YALMIP\cite{Lofberg2004} is convenient to formulate the LPs but, if speed is a priority, the model can be fed straight to the solver to avoid computational overheads.

For large truncations, the coefficients in the constraints of the LPs span many orders of magnitude,
leading to round-off errors in double-precision arithmetic 
and poor solver performance. One way to ameliorate this issue is to scale the decision variables;
in particular, scaling $\pi^r(x)$ by $-q(x,x)$  or $w(x)$
often significantly improves solver performance.

\item \label{step_errorLP} Evaluate the error of the approximations $l_r$ and $u_r$ using \eqref{eq:epl} and \eqref{eq:epu}, respectively. If the error is larger than our tolerance $\epsilon$, we increase the truncation size $r$ and return to the previous step.
    
\item 
In addition to approximating the full distribution, we can apply the above steps to compute other measures of interest by changing the LPs and associated errors in Steps~\ref{step_LP}--\ref{step_errorLP}:
\begin{itemize}
    \item If we want to approximate a marginal distribution, we solve the LPs~\eqref{eq:hat_low}--\eqref{eq:hat_up} and quantify the error using~\eqref{eq:lower_upper_approx}--\eqref{eq:error_upper_approx}.
    \item If we are interested in a particular stationary average $\ave{f}$, we instead solve the LPs~\eqref{eq:lowupthe4} and control the error using the bounds in Theorem~\ref{bounds}$(i)$--$(iii)$.
\end{itemize}
    
\item As the particular reaction network could have several stationary solutions, we check in Step~\ref{step_LP} for non-trivial lower bounds, $l_r(x) > 0$. If we find one such bound, the solution is unique (Corollary~\ref{uniqueness-test}). Otherwise, we investigate further the uniqueness question by increasing $r$ and recomputing the lower bounds to examine the presence of communicating classes as discussed in Corollary~\ref{optimalpointsconv2}.
\end{enumerate}

Our computations were carried out on a desktop computer with a 3.5GHz processor and 16GB of RAM.

\section{Application to biological examples}\label{sec:numexamples}

\begin{figure*}[htbp]
	\begin{center}
	\includegraphics[width=.8\textwidth]{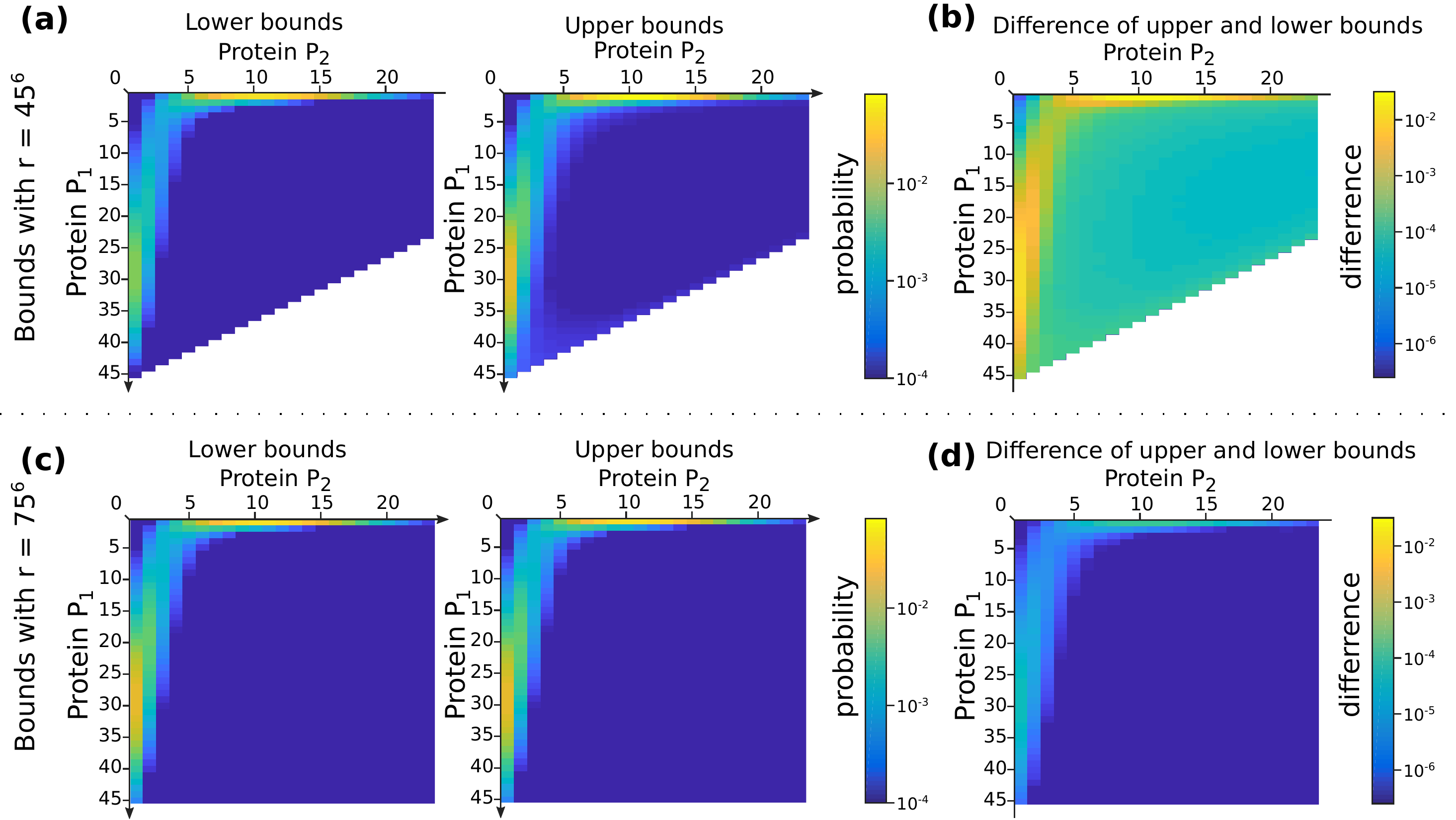} 
	\vspace{-20pt}
	\end{center}
\caption{\textbf{Bounds on the stationary solution of the toggle switch~\eqref{reaction:toggle} as the state space truncation is increased.}  \textbf{(a)} Lower and upper bounds $l^r$ and $u^r$ for the truncation
$\s_{46^6}=\{(x_1+x_2)^6< r=46^6\}$ ($990$ equations involving $1035$ states). The white areas indicate states outside of the truncation. Approximation errors of the lower and upper bounding approximations: $\varepsilon^l_r= 0.31$ 
and $\varepsilon^u_r = 0.41$, 
respectively. 
In total, $2070$ bounds were computed (solver time = $5$ minutes, average of $0.15$ seconds per bound).
\textbf{(b)} Gap between upper and lower bounds: the largest uncertainties occur near the modes.  
\textbf{(c)} Same as (a) but with truncation parameter increased to $r=75^6$ ($2775$ equations involving $2850$ states). The upper and lower bounds are visually indistinguishable, with approximation errors  $\varepsilon^l_r = 2.5\times10^{-3}$ and $\varepsilon^u_r = 2.6\times10^{-3}$.  In total, $5700$ bounds were computed (solver time = $64$ minutes, average of $0.7$ seconds per bound).
\textbf{(d)} The maximum absolute gap between bounds is less than $10^{-4}$.
Parameters: $\theta=1$, $k_1=30$, $k_2=k_4=1$ and $k_3=10$.}
\label{fig:toggle_joint}
\end{figure*}

We now present the application of the methodology to three examples.  First, we showcase how to obtain tight bounds on the stationary solution (and marginals) of a two-dimensional toggle switch. Second, we consider a model of bursty gene expression with negative feedback, through which we explore the capabilities of our method to deal with promoter switching noise. Third, we demonstrate the application of our methods to the non-unique case with a dimerisation network. The code used to compute the approximations and bounds for this last example is available at~\cite{Kuntz2017code}.

\subsection{A toggle switch}
\label{togglesec}
Toggle switches are common motifs in many cell-fate decision genetic circuits\cite{gardner2000,hemberg2007,perez2016}. A simple such circuit consists of two mutually repressing genes\cite{gardner2000}. 
In particular, we consider the asymmetric case with mutual repression modelled via Hill functions and dilution/degradation modelled via linear decay: 
\begin{equation}
\label{reaction:toggle}
\begin{array}{rcl}
\varnothing \xrightarrow{a_1} & P_1 &\xrightarrow{a_2}\varnothing, \\ 
\varnothing \xrightarrow{a_3} & P_2 & \xrightarrow{a_4}\varnothing.
\end{array}
\end{equation}
The state space of the CME is $x \in \s = \n^2$ with $x=(x_1,x_2)$, where 
 $x_1$ and $x_2$ denote the  number of protein $P_1$ and $P_2$, respectively, and the propensities of the reactions are:
\begin{align}
 \label{toggle:prop}
 &a_1(x) = \frac{k_1}{1+(x_2/\theta)^3}, \quad
 &a_2(x) = k_2 x_1, \notag\\
 &a_3(x) = \frac{k_3}{1+x_1}, \quad
 &a_4(x) = k_4 x_2.
\end{align}
where the $k_i > 0$ are kinetic constants and $\theta>0$ is the dissociation constant of $P_1$.  

We follow the steps detailed in Section~\ref{sec:lpcom} to obtain bounds and approximations for this reaction network. First, we show that a stationary solution $\pi$ exists and that all of the moments of every solution are finite using a Foster-Lyapunov criterion (App.~\ref{appendix2}).

\begin{figure}[htbp]
	\begin{center}
	\includegraphics[width=.47\textwidth]{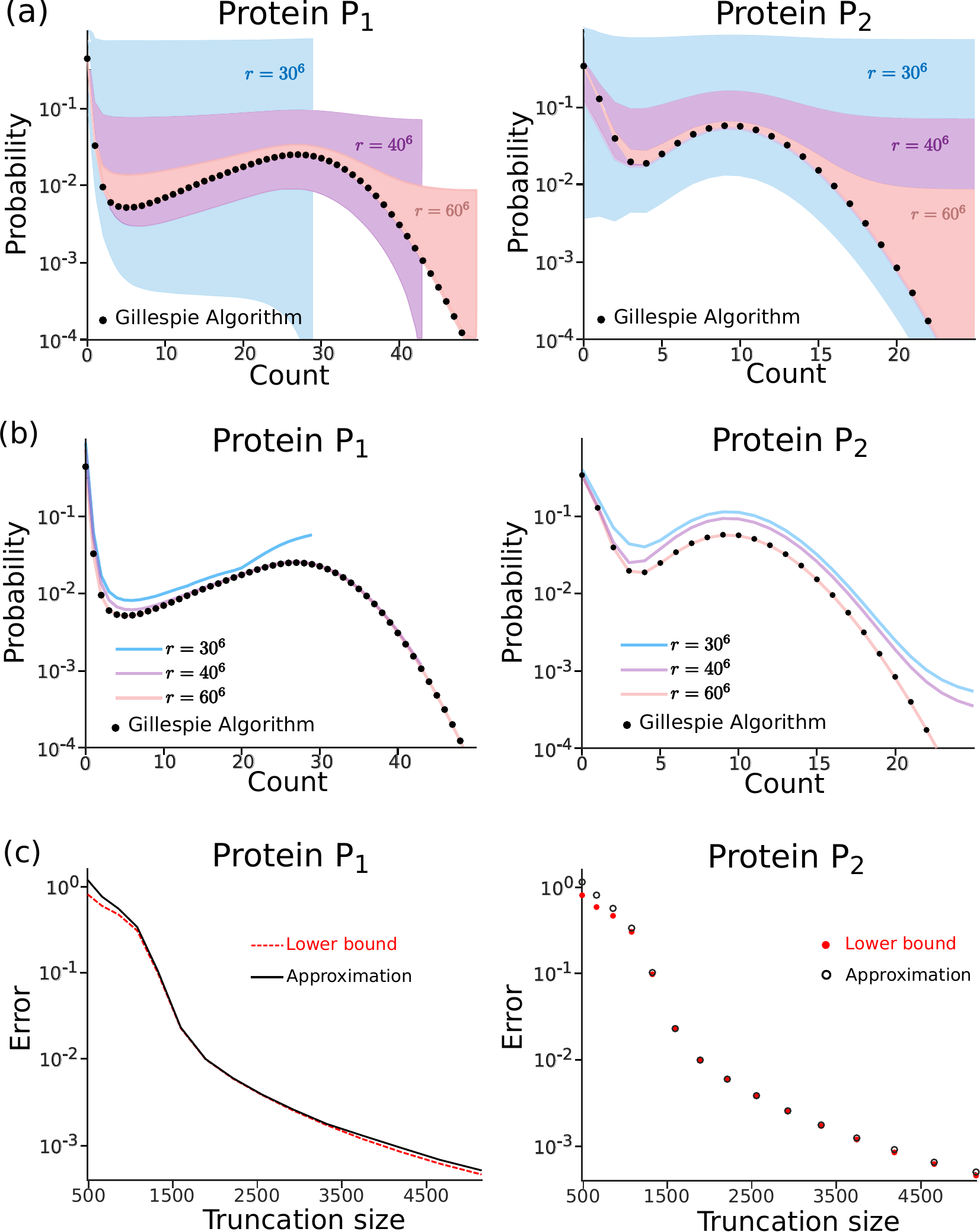}
	\vspace{-20pt}
	\end{center}
\caption{\textbf{Marginal distributions of the toggle switch~\eqref{reaction:toggle}. }
\textbf{(a)} Tight lower bounds $\hat{l}^r$ and loose upper bounds \eqref{eq:marbadbounds} on the marginal distributions of both proteins computed for increasing state space truncations $r=30^6$ (cyan), $40^6$ (purple), and $60^6$ (pink). Overall, $520$ bounds were computed (solver time = $86.6$ seconds, average of $0.17$ seconds per bound). 
For comparison, we show simulations performed using the Gillespie Algorithm with $10^8$ samples (black dots). 
\textbf{(b)} Controlled approximations $\hat{u}^r$ for increasing state space truncations $r=30^6$ (cyan), $40^6$ (purple), and $60^6$ (pink).
\textbf{(c)} Approximation error $\hat{\varepsilon}^l_r$ of the lower bound $\hat{l}^r$ (red) and bound $\hat{\varepsilon}^u_r$ on error of $\hat{u}^r$ (black). In total, $3900$ bounds were computed (solver time = $48.8$ minutes, average of $0.75$ seconds per bound).
Parameters as in Fig.~\ref{fig:toggle_joint}.
}
\label{fig:toggle_marginals}
\end{figure}

We pick the norm-like function
$$w(x):=(x_1+x_2)^6,\qquad\forall x\in\n^2$$
and compute the moment bound 
$$\ave{w}\leq c := 4.48\times10^8$$
by solving the SDP~\eqref{eq:sdp2} with $d:=10$ and $f:=w$ (solver time = $3.6$ minutes). 

We then solve the LPs~\eqref{eq:dn82n2y3rn8qrn1}--\eqref{eq:dn82n2y3rn8qrn2} and compute the bounding approximations $u^r$ and $l^r$ of the stationary solutions.
The fact that the lower bounds $l^r$ are non-zero provides us with a proof of uniqueness of the stationary solution.
Figure~\ref{fig:toggle_joint} shows the bounds for small ($r=45$) and large ($r=75$) state space truncations. 
The maximum absolute discrepancies are found near the modes, and by increasing the size of the truncation, the upper and lower bounds become nearly indistinguishable---the maximum discrepancy drops under $10^{-4}$ (Fig.~\ref{fig:toggle_joint}(d)). Overall, the total approximation error \emph{including the tail} is less than $2.6\times10^{-3}$, as given by \eqref{eq:epl}--\eqref{eq:epu}.

We have also used our method to obtain approximations on the marginal distributions of the number of proteins $P_1$ ($x_1$) and $P_2$ ($x_2$) (Sec.~\ref{sec:mdist}).
The results in Fig.~\ref{fig:toggle_marginals}(a) show that the bounds get tighter for truncations of increasing $r$ (although, as discussed in Remark~\ref{remark:bad_upper_bound}, the upper bound~\eqref{eq:marbadbounds} remains loose). Note, however, that the approximation $\hat{u}^r$ in Fig.~\ref{fig:toggle_marginals}(b) rapidly approaches the Gillespie numerical simulations.  
Fig.~\ref{fig:toggle_marginals}(c) shows that the errors of both $\hat{l}^r$ and $\hat{u^r}$ can be made arbitrarily small by increasing the truncation size (Corollaries~\ref{compstatccm}--\ref{cor:upper_approximation}).

Finally, we apply the method to chart the change of the stationary solution as a function of a parameter.  In particular, the dissociation constant of protein $P_2$ ($\theta$) can be thought of as a bifurcation parameter: increasing $\theta$ allows for higher expression of protein $P_1$. Fig.~\ref{fig:toggle_bifurcation} presents the lower bounds $\hat{l}$ on the marginals of both proteins.
At small values of $\theta$, we observe a single population with high numbers of $P_2$ repressing $P_1$. At large values of $\theta$, the opposite happens: the population we observe has high numbers of $P_1$ repressing $P_2$. For intermediate $\theta$, we observe coexistence
of both populations. Indeed, we find that the modes of the marginal distributions are in good correspondence with the stable solutions of the deterministic steady-state rate equations.

\begin{figure}[htb!]
	\begin{center}
	\includegraphics[width=0.47\textwidth]{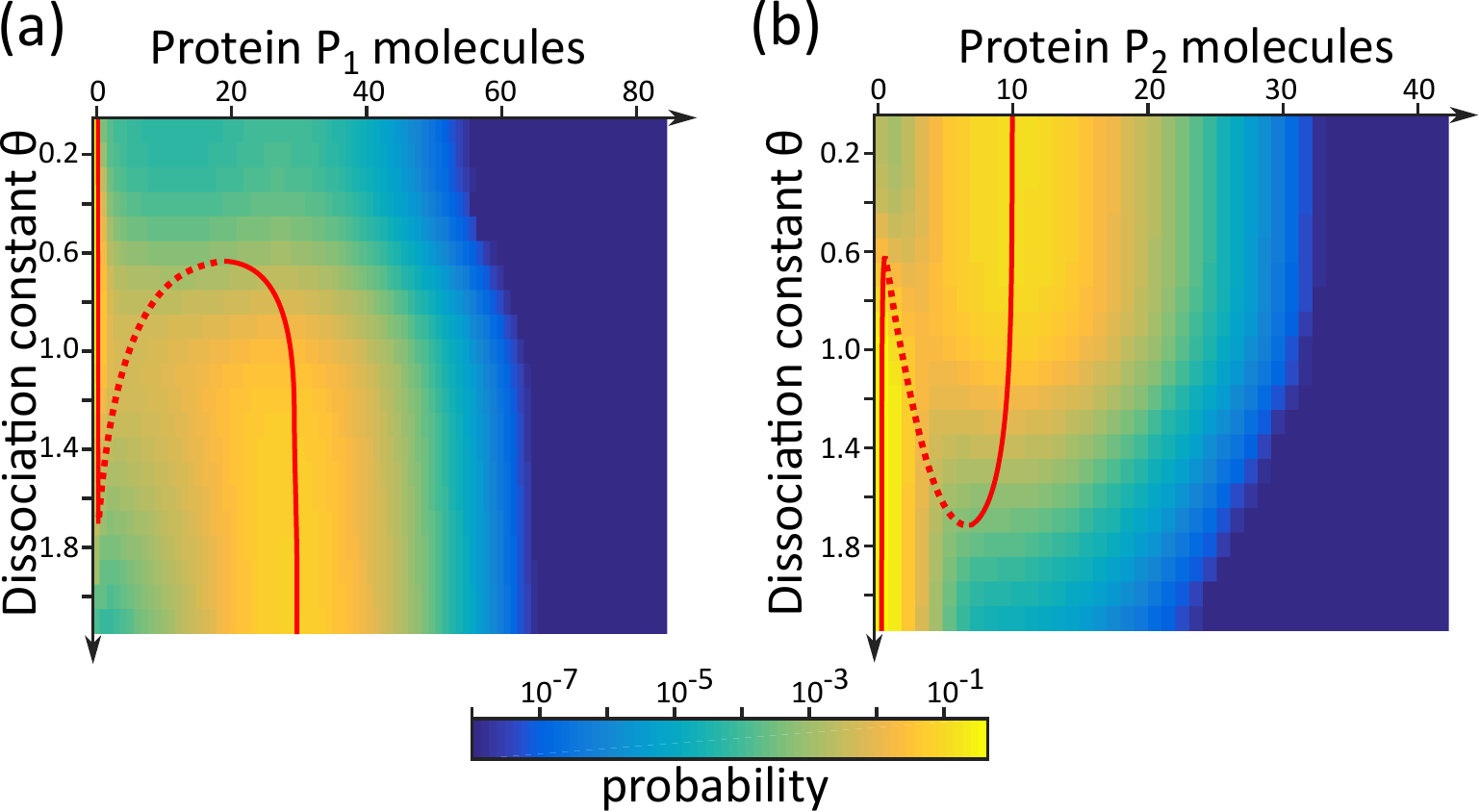}
	\vspace{-20pt}
	\end{center}
\caption{\textbf{Toggling the switch: deterministic \textit{vs} stochastic.} 
As the promoter dissociation constant $\theta$ is increased,
the system switches from a state where overexpression of $P_2$ represses $P_1$ (low $\theta$) to the reverse state where high $P_1$ represses $P_2$ (high $\theta$). In the deterministic case, the two states (stable fixed points, solid red lines) coexist at intermediate $\theta$, and can be reached from different initial conditions separated by  a third unstable steady state (dotted red line). For the stochastic model~\eqref{reaction:toggle}, we compute the marginal stationary probabilities (heatmap) $\hat{\pi}(x_1; \theta)$ in (a) and $\hat{\pi}(x_2; \theta)$ in (b)  for different $\theta$ values,
and observe good correspondence of the modes of the distributions with the deterministic steady states.
Each marginal is approximated by lower bounds computed using $r = 83^6$ ($3403$ equations involving $3486$ states). In total, $3486$ bounds were computed to obtain the full bifurcation diagram ($\theta$ increased in steps of $0.15$): solver time = $62$ minutes, average of 1 second per bound.
Approximation error: $\hat{\varepsilon}^l_r \leq 3\times 10^{-3}, \forall \theta$. 
All parameters (other than $\theta$) as in~Fig.~\ref{fig:toggle_joint}. 
}
\label{fig:toggle_bifurcation}
\end{figure}

\subsection{Bursty gene expression with negative feedback}\label{sec:bursty}
\begin{figure*}
	\begin{center}
	\includegraphics[width=.9\textwidth]{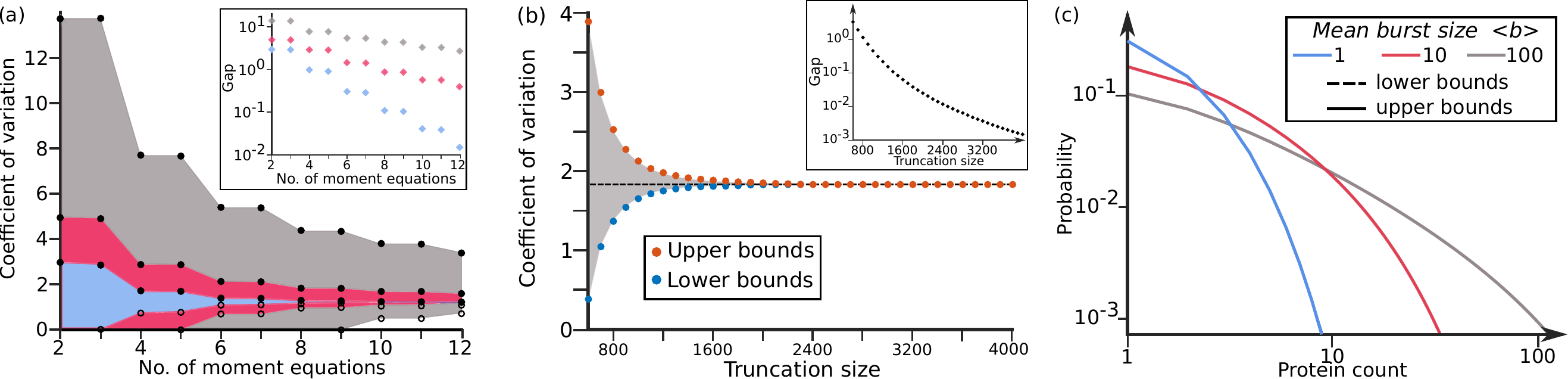}
	\vspace{-20pt}
	\end{center}
\caption{\textbf{Bounds for a bursty gene expression model with negative feedback~\eqref{reaction:gene}}. \textbf{(a)} Lower bounds (open circles) and upper bounds (filled circles) on the CV computed via SDP as the order of the approximation $d$ is increased (No. of moment equations $=d-1$) for different burst sizes, $\langle b\rangle=1$ (cyan), 10 (crimson), 100 (grey).  Colour shadings indicate the gap between bounds for different $\langle b\rangle$. In total, 
$44$ bounds were computed for each $\langle b\rangle$ (solver time=$10$ minutes, average of $13$ seconds per bound). 
Inset: the gap decreases with increasing number of moment equations, albeit more slowly for larger $\langle b\rangle$. 
\textbf{(b)} For large mean burst size ($\langle b\rangle=100$), the upper and lower bounds on the CV can be tightened using LPs with state-space truncations of increasing size (gap shaded in gray).
In total, $140$ bounds were computed (solver time = $11$ minutes, average of $5$ seconds per bound). Inset: the gap can be made arbitrarily small by increasing the truncation size.
\textbf{(c)}  Lower bounds $\hat{l}^r$ and upper bounds $\hat{u}^r$ (visually indistinguishable) on the marginal distribution of the protein for different burst sizes (with $r=10^6,55^6,500^6$ for $\langle b\rangle=1,10,100$, respectively). 
The total variation error is smaller than $4\times10^{-3}$ in all cases. Overall, $1130$ bounds were computed for all burst sies (solver time = $7$ minutes, average of $0.4$ seconds per bound). 
Parameter values as in Ref.~\cite{Kumar2014}: $k_3=k_4=10, k_1=k_2=k_5=1$.
 }
\label{fig:gene}
\end{figure*}
As a second example, consider a model of bursty production of a protein that regulates (negatively) its own expression. The model~\cite{Kumar2014} involves a promoter that switches between active  ($G_\text{on}$) and inactive ($G_\text{off}$) states, and the protein $P$ it encodes.
When the promoter is on, the protein is expressed in bursts of size $b$, a geometrically distributed random variable~\cite{shahrezaei2008} with mean~$\langle b\rangle$. The protein represses its own production by switching off the promoter:
\begin{align}
\begin{array}{rcl}
G_\text{off} &\xrightleftharpoons[a_2]{a_1}& G_\text{on},\\
G_\text{on} + P &\xrightarrow{a_3}& G_\text{off} + P,\\
G_\text{on} &\xrightarrow{a_4}& G_\text{on} +b \enskip P, \\
P &\xrightarrow{a_5}& \varnothing. 
\end{array}
\label{reaction:gene}
\end{align}
The state space of the CME is $x =(x_1,x_2) \in \s = \{0,1\}\times \n$, where $x_1=\{0,1\}$ is a binary variable describing the off/on state of the promoter and $x_2 \in \n$ represents the protein count. The propensities~are
\begin{align*}
&a_1(x)=k_1(1-x_1),\ \
a_2(x)=k_2 x_1,\\
&a_3(x)=k_3 x_2 x_1,\ \
a_4(x)= k_4 x_1,\ \ a_5(x) = k_5 x_2,
\end{align*}
where the $k_i>0$ are reaction rate constants.
In App.~\ref{appendix2} we show that the network has a unique stationary solution $\pi$ and that all of its moments are finite.

This example provides an interesting test case for SDP methods since the protein noise is particularly large: $CV(x_2)$, the coefficient of variation of $x_2$, grows~\cite{Kumar2014} with the burst size $\langle b\rangle$. 
Therefore, we expect that getting tight bounds for the CV will entail the use of a large number of moment equations.
To investigate the effect of such large noise on the efficacy of our SDP method, we compute the following bounds: 
\begin{equation}\label{eq:cvbounds}
\frac{\sqrt{L^d_{x_2^2}-(U^d_{x_2})^2}}{U^d_{x_2}}\leq \text{CV}(x_2) \leq \frac{\sqrt{U^d_{x_2^2}-(L^d_{x_2})^2}}{L^d_{x_2}},
\end{equation}
where we use~\eqref{eq:sdp1}--\eqref{eq:sdp2} and we append the following equalities to our SDP~\eqref{eq:spectrahedron}: 
$$x_1 \in\{0,1\} \implies
\ave{x_1^{\alpha_1}x_2^{\alpha_2}}=\ave{x_1 x_2^{\alpha_2}} \, \alpha_1>0, \alpha_2\geq0.$$
Figure~\ref{fig:gene}(a) shows how the bounds~\eqref{eq:cvbounds} get tighter as we increase the number of moment equations in our SDP calculations. As expected, for small mean burst sizes ($\langle b\rangle=1$), the bounds become tight with $10$ moment equations, but tightening the bounds becomes difficult when the burst size is larger ($\langle b\rangle=10, 100$). 

Computing tight bounds for large $\langle b\rangle$ with the naive SDP approach  would thus require a prohibitive number of moment equations. However, we can apply the LP method of Sec.~\ref{bdist} to overcome this limitation. To do this, use SDP to compute a (cheaper) \emph{loose} upper bound on the sixth moment $L^{13}_{x_2^6}=2.3\times10^{13} \leq \ave{x_2^6} \leq U^{13}_{x_2^6}=4.5\times10^{13}$ (time=1 minute per bound). 
We then set $w(x):=x_2^6$
and $c:=U^{13}_{x_2^6}$ 
in Theorem~\ref{bounds} to obtain:
\begin{align*}
& l^r_{x_2} \leq \ave{x_2}\leq u^r_{x_2}+\frac{c}{r^{5/6}},\quad \\
&l^r_{x_2^2} \leq \ave{x_2^2}\leq u^r_{x_2^2}+\frac{c}{r^{2/3}},
\end{align*}
which we combine as in~\eqref{eq:cvbounds} to obtain much tighter bounds on CV($x_2$). 
Figure~\ref{fig:gene}(b) shows the convergence of these tight bounds for CV($x_2$) with $\langle b\rangle =100$ as the truncation size is increased. These results exemplify the fact that it is enough to obtain loose SDP bounds on a higher order moment in order to obtain arbitrarily tight LP bounds on lower order moments (Theorem \ref{bounds}).

\begin{figure*}[htbp]
	\begin{center}
	\includegraphics[width=0.9\textwidth]{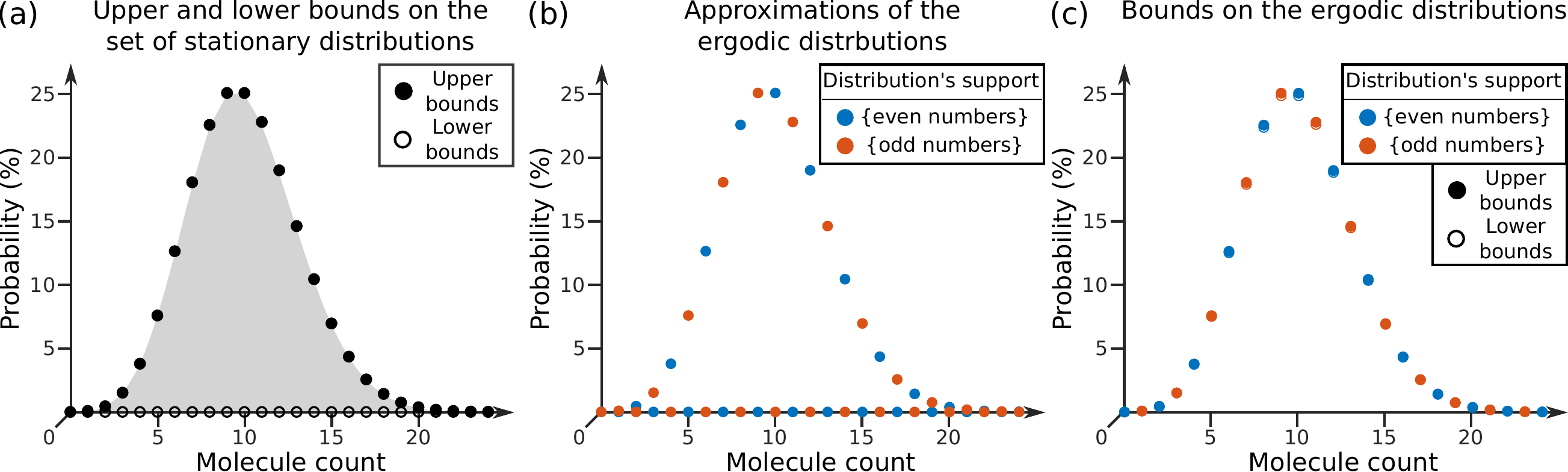} 
	\vspace{-20pt}
	\end{center}
\caption{\textbf{A dimerisation network~\eqref{eq:simple} with
multiple stationary distributions.}
\textbf{(a)} Lower ($l^r$, open circles) and upper ($u^r$, filled circles) bounds on the set of stationary distributions for the truncation $\s_{25^7}=\{x^7<r=25^7\}=\{0,1,\dots,24\}$ ($23$ equations involving $25$ states; solver time = $0.5$ seconds averaging $0.01$ seconds per bound). The gray shading indicates the gap between upper and lower bounds. Note that the lower bounds are zero indicating the presence of multiple stationary distributions.
\textbf{(b)} The optimal points $\pi^{*,r}_0$ and $\pi^{*,r}_1$ of the LPs~\eqref{eq:simplelps} (solver time = $0.01$ seconds per optimal point) provide approximations of the ergodic distributions and indicate that there are two closed communicating classes: the even numbers and the odd numbers. \textbf{(c)} The lower and upper bounds (open and filled circles, visually indistinguishable) computed separately on each of the two ergodic distributions with support on the odd numbers (in red) and even numbers (in blue) (solver time = $0.5$ seconds, average of $0.01$ seconds per bound). Parameter values: $k_1=50$ and $k_2=0.5$. The lower bounds are now non-zero, indicating the uniqueness of each of the ergodic distributions.}
\label{fig:8}
\end{figure*}

Finally, we exemplify in Figure~\ref{fig:gene}(c) another use of our capability to bound marginal distributions following the steps in Section~\ref{sec:lpcom}. 
In this case, we marginalise over the on/off promoter variable ($x_1$) and we compute upper bounds $\hat{u}^r$ and lower bounds $\hat{l}^r$ (visually indistinguishable in Fig.~\ref{fig:gene}(c)) on $\hat{\pi}(x_2)$, the distribution of protein counts, for three different burst sizes, 
$\langle b\rangle=1, 10, 100$.
As expected, the protein distribution widens considerably (yet still with light tails) as the burst size increases.  
To compute these bounds, we set
$$A_i=\{0,1\}\times \{i\},\qquad\forall i\in\{i\in\n:i^6<r\}=:\cal{I}_r,$$
and solve the LPs \eqref{eq:hat_low}--\eqref{eq:hat_up} to obtain  \eqref{eq:lowerdm}--\eqref{eq:upperdm}.
Note that this marginalisation is over the \emph{complete}, untruncated domain of the marginalised variable $x_1=\{0,1\}$. As a result, the $\hat{u}^r$ do provide upper bonds in this case (Remark~\ref{remark:bad_upper_bound}). 
\subsection{Dimerisation network with multiple stationary distributions}
\label{sec:ergodic_example}
To illustrate the use of our method on CMEs with multiple stationary solutions (see Section~\ref{sec:nonunique}), we consider the reversible dimerisation network
\begin{equation}\label{eq:simple}\varnothing \xrightarrow{a_1} 2S\xrightarrow{a_2} \varnothing,\end{equation}
with state space $\s=\n$ and mass action kinetics: $a_1(x):=k_1$ and $a_2(x):=k_2 \, x_1(x_1-1)$. The reactions preserve the parity of the number of molecules;   
hence  the even numbers $\cal{C}_0$ and the odd numbers $\cal{C}_1$ are closed communicating classes. Furthermore, because the network obeys detailed balance\cite{vanKampen1976}, it is straightforward to obtain analytical expressions for the two ergodic distributions:
\begin{equation}\label{eq:simpleana}\pi_i(x)=\frac{1}{Z_i}\frac{\mu^x}{x!} \quad \forall x \in \cal{C}_i \quad i=0,1,\end{equation}
where $\mu:=\sqrt{k_1/k_2}$ and the normalising constants are given by $Z_0:=\cosh(\mu)$ and $Z_1:=\sinh(\mu)$.

To illustrate the use of our tools in such a non-unique case, suppose we were not aware of the above facts and instead apply our computational procedure. First, we use a Foster-Lyapunov criterion and find that the rate matrix is regular; at least one stationary solution exists; all of the moments of each stationary solution are finite; and, for each closed communicating class, there exists an ergodic distribution with support in that class (App.~\ref{appendix2}).

Next, we obtain bounds on the stationary solutions. 
Using the norm-like function
$$w(x):=x^{7},\qquad\forall x\in\n, $$
we solve the SDP~\eqref{eq:sdp2} with $d:=7$ and $f:=w$ (solver time = $3$ seconds) to obtain the moment bound 
$$\ave{w}\leq c = 4.8814\times10^{7}.$$
We then solve the LPs~\eqref{eq:dn82n2y3rn8qrn1}--\eqref{eq:dn82n2y3rn8qrn2} to compute upper and lower bounds, $u^r$ and $l^r$ (Fig.~\ref{fig:8}(a)). 
Note that the lower bounds remain trapped at zero, indicating that the stationary solution is non-unique (as given by Theorem~\ref{uniqueness-test}). 
Furthermore, we observe that the mass of the upper bounds approaches two as the truncation grows 
(i.e., $u^{r}(\s_{r}) \to 2$ as $r \to \infty$),
hinting that there exist \emph{two} ergodic distributions corresponding to two closed communicating classes (c.f.~\eqref{eq:upmasserg}).

To identify the communicating classes, we note that the optimal points $\pi^{*,r}_0$ and $\pi^{*,r}_1$ of the linear programs
\begin{equation}\label{eq:simplelps}
\sup\{\pi^r(0):\pi^r\in\cal{P}^r\},\quad \sup\{\pi^r(1):\pi^r\in\cal{P}^r\}
\end{equation}
approach distributions with support on the even and odd numbers, respectively (Fig.~\ref{fig:8}(b)).
These observations indicate that the communicating classes are the even numbers $\cal{C}_0$ and the odd numbers $\cal{C}_1$ 
(Corollary~\ref{optimalpointsconv2}). 

To verify this claim, we compute two different sets of upper and lower bounds over $\cal{C}_0$ and $\cal{C}_1$ separately. 
When computed over the subsets $\cal{C}_0$ and $\cal{C}_1$ separately, the lower bounds are non-zero, thus
attesting to the uniqueness of each stationary distribution over its communicating class. 
Using $r=25^7$, the total variation error of the lower bounds~\eqref{eq:epl} is approximately $8\times10^{-3}$, while that of the upper bounds~\eqref{eq:epu} is bounded above by $8\times10^{-3}$. In fact, the actual errors of the upper bounds computed from the analytical expressions~\eqref{eq:simpleana} are $<10^{-4}$,
i.e., substantially smaller than the guaranteed bound.

\section{Discussion}
\label{sec:discussion}

We have introduced two mathematical programming approaches that yield bounds on: (i) the stationary moments, and  (ii) the stationary distributions of biochemical reaction networks. 
These statistical quantities typically satisfy infinite sets of coupled equations: (i) the stationary moment equations and (ii) the stationary CME. Both our approaches consider a subset of these equations and employ: (i) semidefinite programming and (ii) linear programming to bound the set of solutions. The bounds we obtain provide converging
estimates of moments and probabilities with quantifiable errors.

Regarding our first method, which provides bounds for stationary moments, recently, and independently of our work, SDP-based procedures  have been proposed by several authors~\cite{Kuntz2017,Dowdy2017,Ghusinga2017a,Sakurai2017}. 
Our work differs from those works in two ways: firstly, our results apply to networks with both polynomial and \emph{rational} propensities, a wider class of networks of interest in biochemistry, beyond the mass action models considered in Refs.~\cite{Dowdy2017,Ghusinga2017a,Sakurai2017}; 
secondly, we give mathematically precise conditions for the validity  of the method (Assumption~\ref{ass:1}) and we explain how these conditions can be verified in practice. 
To the best of our knowledge, our second approach, the LP bounding and approximation procedure for probability distributions, has not appeared in the CME literature  (see Ref.~\cite{Kuntz2018a} for a discussion of related methodologies in the optimisation
literature). 
Importantly, both methods are tightly interlinked: our second method uses SDP moment bounds to formulate the LPs, in order to obtain controlled approximations of stationary solutions and marginals.

For some CMEs, the SDP approach might need to include a large number of moments to obtain accurate estimates of lower order moments (see Figs.~\ref{fig:schomom}~and~\ref{fig:gene}). Such large SDPs pose a computational challenge for larger networks, as the number of moments $\#_d$ is $\binom{n+d}{d}$, where $n$ denotes the number of species in the network and $d$ is the maximum moment order, and thus explodes combinatorially with the number of species. Similar costs are encountered when using moment-closure methods~\cite{lakatos2015,schnoerr2015}.
In contrast with moment closure methods \cite{engblom2006,engblom2014,lakatos2015,schnoerr2015}, however, the proposed SDP method to bound moments yields approximations with quantified errors. Furthermore, we show that repeated applications of our SDP method yield upper (resp. lower) bounds that are monotonically decreasing (resp. increasing) as the number of moment equations and inequalities is increased (Theorem~\ref{inclu}). Although, as mentioned in Sec.~\ref{sec:mommulti}, there are reaction networks for which the bounds do not converge to the exact moments, they often converge in practice (Fig.~\ref{fig:schomom} and other examples in Refs.~\cite{Dowdy2017,Dowdy2018,Ghusinga2017a,Sakurai2017,Sakurai2018}). 
In addition, when tight SDP bounds prove computationally too expensive, our LP approach can be used to tighten bounds on moments of interest employing a loose SDP bound on a higher moment (Fig.~\ref{fig:gene}).
Lastly, it is possible to obtain sharper SDP bounds for restricted state spaces~\cite{Lasserre2009,Laurent2009,Blekherman2013,Dowdy2018}, but these refinements are beyond the scope of this paper.

As stated above, the LP approach produces convergent bounds on the stationary solutions (including their marginals or averages). To do so, it uses a moment bound obtained with the SDP method. 
It is worth remarking that, while we have limited ourselves to rational networks where the moment bound can be obtained using SDPs, the LP approach can be extended beyond rational propensities by using Foster-Lyapunov criteria~\cite{Glynn2008}. 
If the CME has a unique solution, our LP method yields converging lower and upper bounds on this solution and easy-to-evaluate error bounds (Corollaries \ref{compstatcc}, \ref{compstatccm}~and~\ref{cor:upper_approximation}). If the CME has multiple solutions, our method provides bounds over the set of possible solutions. Furthermore, the procedure can be adapted to infer the closed communicating classes and to compute converging approximations of (and bounds on) the corresponding ergodic distributions. Additionally, if we are unsure whether the stationary solution is unique, our method provides a uniqueness test (Corollary \ref{uniqueness-test}) that settles the question.

Although LP solvers are highly mature and scalable, the applicability of our LP approach can  present computational challenges.  
Firstly, as discussed in Sec.~\ref{sec:lpcom} (Step~\ref{step_LP}), the LPs can become ill-conditioned if the truncation is large, although this issue is mitigated by scaling the variables and by ongoing improvements in LP solvers. Secondly, although the computational cost 
of solving an LP depends on the algorithm, the cost per bound is at least $\cal{O}(\mmag{\s_r})$, where $\mmag{\s_r}$ is the size of the truncation.
For the purpose of computing the entire distribution, we need $\cal{O}(\mmag{\s_r})$ such bounds; hence  
the cost is at least  $\cal{O}(\mmag{\s_r}^2)$. 
If computing a marginal distribution where $k$ species remain,  
we need $\cal{O}(\mmag{\s_r}^{k/n})$ bounds with an  
cost of at least $\cal{O}(\mmag{\s_r}^{1+k/n})$.
If only a stationary average is of interest, 
the cost is at least $\cal{O}(\mmag{\s_r})$, since 
only two bounds per average need to be computed. 
Note also that the truncation size typically grows combinatorially in the number of unbounded species, e.g., 
the number of states for a simplex truncation $\{x\in\nn:x_1+\dots+x_n\leq M\}$ is $\binom{n+M}{n}$, where $M$ is an upper cut-off for the species count. Hence the cost of LPs  suffers a combinatorial explosion in the number of species, as for all truncation-based methods~\cite{Tweedie1971,Hart2012,Gupta2017,Dayar2011,Spieler2014,Dayar2011a}.

Several other truncation-based schemes have been proposed to approximate the stationary solutions of the CME\cite{Hart2012,Gupta2017,Dayar2011,Dayar2011a}. In contrast with ours, those schemes typically assume that the CME has a unique stationary distribution, which has to be verified separately \cite{pauleve2014,Gupta2018}. Perhaps most extensively studied is the \emph{truncation-and-augmentation} (TA) scheme, originally proposed by Seneta\cite{Seneta1967} for discrete-time chains. Its continuous-time counterpart\cite{Tweedie1971,Hart2012,Gupta2017} converges in total variation for exponentially ergodic chains, monotone chains, and certain generalisations~\cite{Hart2012}. However,  bounds on the TA approximation error can be conservative and often involve constants that are difficult to compute in practice~\cite{Hart2012,Gupta2017,Tweedie1998,Meyn1994,Liu2015,Masuyama2017,Masuyama2017a,Liu2018,Liu2018a}.
Spieler et al.\cite{Dayar2011,Spieler2014} overcame this issue by iterating the TA scheme and applying a tail bound derived from a Foster-Lyapunov criterion to bound the stationary distribution.  
Spieler's truncation-based scheme is thus closest to ours. However, their scheme entails solving only systems of linear equations which, although cheaper to compute and simpler to implement than LPs, offer no guarantee of convergence and are only applicable in the unique case. 

Another distinct feature of our method is that it enables the direct computation of bounds on the marginal distributions, without the need to compute bounds for each state of the joint distribution. Marginal distributions are of particular interest for the analysis of high-dimensional networks and for inference of model parameters from single cell data. Since our approach yields upper and lower bounds on the marginals, it can be used to bound the likelihood or likelihood ratios from experimental observations. 
This would be useful to extend the work in Ref.~\cite{fox2016} avoiding error redistribution using water filling methods, and aiding  
by selecting the size of truncations that are sufficient for parameter identifiability. Similar bounds on acceptance ratios could be used in Metropolis-Hastings algorithms to extend the applicability of our method to Bayesian inference. We therefore expect that our approach will be valuable not only for estimating distributions, but also for estimating model parameters from noisy single cell data  where accurate approximations with quantified errors are needed.

\section{Acknowledgements}
We thank Justine Dattani and Diego Oyarz\'{u}n for stimulating discussions, and Michela Ottobre and Jure Vogrinc for insights on the stability and long-term behaviour of continuous-time chains. JK gratefully acknowledges support through a BBSRC PhD studentship (BB/F017510/1); PT through a Fellowship of the Royal Commission for the Exhibition of 1851; GBS through the EPSRC Fellowship EP/M002187/1; and MB through the EPSRC grant EP/N014529/1 funding the EPSRC Centre for Mathematics of Precision Healthcare.
\appendix

\section{Minimal continuous-time Markov chains, their long-term behaviour and stationary distributions}\label{appendix1}
In practice, the chain $X=(X(t))_{t\geq0}$ in Sec.~\ref{sec:stationary} is often constructed by running the Gillespie Algorithm\cite{Gillespie1976,Kendall1950,Feller1940}, i.e., one starts the chain from a state $x$ sampled from an \emph{initial distribution} $\lambda:=(\lambda(x))_{x\in\s}$. If $q(x):=-q(x,x)$ in \eqref{eq:qmatrix}--\eqref{eq:qmatrix2} is zero, leave the chain at the $x$ for all time. Otherwise, wait an exponentially distributed amount of time with mean $1/q(x)$, sample $y\neq x$ from the probability distribution $(q(x,y)/q(x))_{y\neq x}$, and update the chain's state to $y$ (we say that the chain \emph{jumps} from $x$ to $y$ and we call the time at which it jumps the \emph{jump time}). Repeat these steps starting from $y$ instead of $x$. All random variables sampled must be independent of each other.

If $T_n$ denotes the $n^{th}$ jump time, then the limit
$$T_\infty:=\lim_{n\to\infty}T_n$$
is known as the \emph{explosion time} of the chain, i.e., the first instant by which the chain has left every finite subset of the state space\cite[Sec.~2.3]{Kuntz2018}. If no such explosion occurs, then $T_\infty = \infty$, and we say that the chain is \emph{non-explosive}: 
\begin{equation}\label{eq:nonexp}\Pbl{\{T_\infty=\infty\}}=1,\end{equation}
where $\Pb_\lambda$ denotes the probability measure underlying the chain  (the subscript $\lambda$ emphasises the fact that the starting state was sampled from the distribution $\lambda$). If \eqref{eq:nonexp} holds for every probability distribution $\lambda$ ($\lambda(x)\geq0, \forall x\in\s, \lambda(\s)=1$), then the rate matrix $Q$ is \emph{regular}.

We collectively refer to the probabilities 
$$(p_t(x))_{x\in\s,t\geq0}=(\Pbl{\{X_t=x,t<T_\infty\}})_{x\in\s,t\geq0}$$
of observing the process in the state $x:=(x_1,\ldots,x_n)\in\s$ at time $t\geq0$ as the \emph{time-varying law} of the chain. The time-veraying law is the minimal non-negative solution of the CME~\eqref{eq:CME} (see Ref.\cite[Cor.~2.21]{Kuntzthe}) 

A probability distribution $\pi:=\{\pi(x)\}_{x\in\s}$ on $\s$ is said to be a \emph{stationary} (or \emph{steady-state} or \emph{invariant}) \emph{distribution} of the chain if sampling the chain's starting position from $\pi$ ensures that it will be distributed according to $\pi$ for all time:
\begin{equation}\label{eq:statdef}\Pbp{\{X_t=x,t<T_\infty\}}=\pi(x),\quad\forall x\in\s,\enskip t\geq0.\end{equation}
Summing both sides of \eqref{eq:statdef} over $x\in\s$ and taking the limit $t\to\infty$, we find that the chain is non-explosive when its starting location is sampled from a stationary distribution:
\begin{equation}\label{eq:statnoexp}\Pbp{\{T_\infty=\infty\}}=1.\end{equation}
Taking the derivative in time of \eqref{eq:statdef}, we find that stationary distributions are stationary solutions of the CME \eqref{eq:CME} (that is, it belongs to \eqref{eq:cmestat}). The reverse direction is slightly more complicated:
\begin{theorem}[Theorem 2.41\cite{Kuntzthe}]\label{Qstateq} Let $X$ be a continuous-time chain with rate matrix $Q$ satisfying \eqref{eq:qmatrix}--\eqref{eq:qmatrix2}. A probability distribution $\pi$ on $\s$ is a stationary distribution of $X$ if and only if it is a stationary solution of the CME  and the chain is non-explosive when initialised with law $\pi$ (i.e., \eqref{eq:statnoexp} holds).
\end{theorem}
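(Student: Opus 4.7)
The plan is to prove each direction using the fact, cited in the excerpt, that the time-varying law $\{p_t\}$ is the minimal non-negative solution of the CME with initial condition $\lambda = \pi$.

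For the forward direction, suppose $\pi$ is a stationary distribution, so \eqref{eq:statdef} holds. First I would sum \eqref{eq:statdef} over $x \in \s$ to get $\Pbp{\{t < T_\infty\}} = \sum_{x\in\s} \pi(x) = 1$ for every $t \geq 0$. Since the events $\{t < T_\infty\}$ decrease to $\{T_\infty = \infty\}$ as $t \to \infty$, continuity of measure from above yields $\Pbp{\{T_\infty = \infty\}} = 1$, which is \eqref{eq:statnoexp}. Next, because $p_t(x) := \Pbp{\{X_t=x, t<T_\infty\}} = \pi(x)$ is constant in $t$ and satisfies the CME, differentiating in time gives $0 = \tfrac{d}{dt} p_t(x) = p_t Q(x) = \pi Q(x)$, so $\pi$ is a stationary solution of the CME.

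For the converse, suppose $\pi$ is a stationary solution of the CME and \eqref{eq:statnoexp} holds. The constant trajectory $\tilde{p}_t := \pi$ satisfies $\tfrac{d}{dt} \tilde{p}_t(x) = 0 = \pi Q(x) = \tilde{p}_t Q(x)$ with $\tilde{p}_0 = \pi$, hence it is a non-negative solution of the CME with initial condition $\pi$. The minimality of the time-varying law (cited from Cor.~2.21 of Kuntzthe in the excerpt) then gives the pointwise inequality
\begin{equation*}
p_t(x) = \Pbp{\{X_t=x, t<T_\infty\}} \leq \tilde{p}_t(x) = \pi(x), \qquad \forall x\in\s,\ t\geq 0.
\end{equation*}
On the other hand, non-explosion under $\pi$ gives $\sum_{x\in\s} p_t(x) = \Pbp{\{t<T_\infty\}} = 1$ for every $t \geq 0$. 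Since $\sum_x p_t(x) = 1 = \sum_x \pi(x)$ and $p_t(x) \leq \pi(x)$ termwise, the two must coincide for every $x$, yielding \eqref{eq:statdef} and thus that $\pi$ is a stationary distribution.

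The only delicate step is the converse, whose crux is the minimality property of the time-varying law: without it one could not conclude $p_t \leq \pi$ pointwise, and the trick of combining this inequality with the mass-preservation consequence of non-explosion to force equality would fail. Everything else reduces to elementary monotone-continuity of measure and direct differentiation of \eqref{eq:statdef}, so invoking the minimality result stated in the excerpt is the single non-routine ingredient.
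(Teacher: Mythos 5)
Your proof is correct, and the forward direction is exactly the argument the paper sketches in the sentences immediately preceding the theorem: summing \eqref{eq:statdef} over $x\in\s$ and letting $t\to\infty$ gives \eqref{eq:statnoexp}, and differentiating \eqref{eq:statdef} in $t$ gives $\pi Q=0$. The paper does not actually prove the converse --- it remarks that ``the reverse direction is slightly more complicated'' and defers to \cite[Theorem~2.41]{Kuntzthe} --- but your completion is sound and uses precisely the ingredient the paper itself quotes, namely that the time-varying law is the \emph{minimal} non-negative solution of the CME \eqref{eq:CME} (\cite[Cor.~2.21]{Kuntzthe}): the constant trajectory $\tilde p_t=\pi$ is a non-negative solution with initial condition $\pi$, so $p_t(x)\leq\pi(x)$ pointwise, and since non-explosion gives $\sum_x p_t(x)=\Pbp{\{t<T_\infty\}}=1=\sum_x\pi(x)$, the termwise inequality must be an equality, which is \eqref{eq:statdef}. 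This is the standard argument and, as far as the statement in this paper is concerned, there is no gap.
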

In particular, assuming that $Q$ is regular, $\pi$ is a stationary distribution if and only if it is a stationary distribution of the chain. In other words, \eqref{eq:cmestat} is an \emph{analytical} (as in non-probabilistic) \emph{linear programming characterisation}\cite{Kuntzthe,Kurtz1998} of the set of stationary distributions for regular $Q$. The non-explosivity in Theorem \ref{Qstateq} is crucial: a counterexample is the birth-death process \eqref{eq:one_step} with $a_+(x):=2^{2x}$ and $a_-(x):=2^{2x}/2$. In this case, the sum in \eqref{eq:normalising} is finite showing that the CME has a unique stationary solution $\pi$ given by \eqref{eq:1deg}--\eqref{eq:normalising}. However, \cite[Theorem 11]{Reuter1957} shows that the process is explosive for any initial distribution (including $\pi$) and it follows from \eqref{eq:statnoexp} that no stationary distribution exists.

Stationary distributions are of interest because, if the chain is stable, then, regardless of the initial distribution $\lambda$, they determine\cite{Meyn1993a} the chain's long term behaviour. i.e., the time-varying law of the chain converges in total variation to a stationary distribution, $\pi$:
\begin{equation}\label{eq:spaceaverages}\lim_{t\to\infty}\norm{p_t-\pi}=0.\end{equation}
Furthermore, the \emph{empirical distribution} $\rho_{T}$, which
denotes the fraction of the time interval $[0,T]$ that the chain will spend in a state $x$
$$\rho_T(x):=\int_0^{\min\{T,T_\infty\}}1_x(X_t)dt\quad\forall x\in\s,$$
also converges to $\pi$
\begin{equation}\label{eq:timeaverages}
\lim_{T\to\infty}\norm{\rho_T-\pi}=0\quad\Pb_\lambda\text{-almost surely},
\end{equation}
In general, the stationary distributions featuring in \eqref{eq:spaceaverages} and \eqref{eq:timeaverages} depend on the initial distribution $\lambda$ and on the starting location $X_0$, respectively.

If the chain starts in one of the closed communicating classes (defined in Sec.~\ref{nonuniquesec}), then it can never escape the class. The convergence of the empirical distribution in \eqref{eq:timeaverages} then implies that, in the stable case, there must exist at least one stationary distribution per closed communicating class $\cal{C}_j$ and that the stationary distribution must have support contained in $\cal{C}_j$ ($\pi_j(\cal{C}_j)=1$). This distribution $\pi_j$ is unique and, if the initial distribution $\lambda$ has its mass contained in $\cal{C}_j$, then\cite{Meyn1993a} both the time-varying law and the empirical distribution converge to $\pi_j$ (in the sense that the $\pi$ featuring in both \eqref{eq:spaceaverages} and \eqref{eq:timeaverages} is $\pi_j$). For this reason, $\pi_j$ is known as an \emph{ergodic distribution} of the chain. The definition of the set  $\cal{T}:=\s\backslash\cup_{j}\cal{C}_j$ featuring in the decomposition \eqref{eq:ddc} implies that the chain visits any given state $x$ in $\cal{T}$ at most finitely many times (in particular $1_x(X_t)\to0$ as $t\to\infty$ $\Pb_\lambda$-almost surely). It follows that the chain's paths must eventually either leave $\cal{T}$ or diverge to infinity. In the case of a stable chain, tending to infinity is not an option and so the chain eventually enters one of the closed communicating classes. It then follows from \eqref{eq:statdef} that no stationary distribution $\pi$ such that $\pi(x)>0$ exists for a state $x$ in $\cal{T}$. Bringing this discussion together\cite{Meyn1993a}, we have that stationary distribution $\pi$ in \eqref{eq:timeaverages} is the ergodic distribution $\pi_j$ of the closed communicating class $\cal{C}_j$ that the chain's path eventually enters, while that in \eqref{eq:spaceaverages} is a weighted combination of the ergodic distributions where the weight given to $\pi_j$ is the probability that the chain ever enters $\cal{C}_j$. Theorem \ref{doeblinc} in Sec.~\ref{nonuniquesec} follows from these facts.

\section{A Foster-Lyapunov criterion}\label{appendix2}
In practice, verifying whether a chain is stable is done by applying a Foster-Lyapunov criterion. For our examples, we will use the following well-known criterion:
\begin{theorem}[Foster-Lyapunov criterion~\cite{Chen1991,Meyn1993a,Meyn1993b}]
\label{lyap} If there exist constants $K_1\in\r$, $K_2>0$, and a norm-like function $w$ (Definition~\ref{def:norm_like}) that satisfy
$$Qw(x):=\sum_{y\in\s}q(x,y)w(y)\leq K_1-K_2w(x)\qquad \forall x\in\s,$$
then the following hold:
\begin{enumerate}[label=(\roman*)]
\item The rate matrix $Q$ is regular.
\item There exists at least one stationary distribution.
\item For each closed communicating class, there exists an ergodic distribution with support in that class.
\item Every stationary distribution $\pi$ satisfies $\ave{w}\leq K_1/K-2<\infty.$
\item The stationary distributions determine the long-term behaviour of the chain: 
\begin{enumerate}[label=\alph*)]
\item for every deterministic starting distribution there exists a stationary distribution $\pi$ satisfying \eqref{eq:spaceaverages}; 
\item for any starting distribution $\lambda$ and for $\Pb_\lambda$-almost every path, there exists a stationary distribution $\pi$ satisfying \eqref{eq:timeaverages}.
\end{enumerate}
\end{enumerate}
\end{theorem}
\begin{proof}Part $(i)$ is Ref.~\cite[Theorem 1.11]{Chen1991} (see also Ref.~\cite[Theorem 2.1]{Meyn1993b}). Parts $(ii)$--$(v)$ follow from Ref.~\cite[Theorems 8.1 and 8.2]{Meyn1993a} and Ref.~\cite[Theorem 4.6]{Meyn1993b}.\end{proof}

We have used this criterion for our examples in the main text, as follows.
\paragraph{\textbf{Schl\"ogl's model:}}
In the case of  Schl\"ogl's model \eqref{eq:smdl}, fixing $w(x):=x^{d-2}$ for an integer $d>2$, we have
$$Qw(x)=g_{d-1}(x)-k_2(d-2)x^{d},$$
where $g_{d-1}$ is a polynomial of degree $d-1$. Thus,
\begin{align*}Qw(x)&\leq \sup_{x\in\n}\left\{g_{d-1}(x)-\frac{k_2(d-2)}{2}x^{d}\right\}-\frac{k_2(d-2)}{2}x^{d}\\
&\leq \sup_{x\in\n}\left\{g_{d-1}(x)-\frac{k_2(d-2)}{2}x^{d}\right\}-\frac{k_2(d-2)}{2}x^{d-2},\end{align*}
and the supremum is finite. Taking $d\geq 3$, Theorem~\ref{lyap} tells us that~\eqref{eq:smdl} has a regular rate matrix and at least one stationary distribution; that the moments $\ave{x^1},\dots,\ave{x^{d-2}}$ are finite for any stationary distribution $\pi$; and that the limits \eqref{eq:spaceaverages}--\eqref{eq:timeaverages} hold. Since we can choose ever larger $d$, we have finiteness of all moments. Uniqueness of the distribution follows from~\eqref{eq:1deg}--\eqref{eq:normalising}.
 
\paragraph{\textbf{Toggle switch:}}
In the case of the toggle switch chain of Sec.~\ref{togglesec}, setting $w(x):=(x_1+x_2)^d$, we have
$$\begin{array}{ll}Qw(x) &=(a_1(x)+a_3(x))((x_1+x_2+1)^d-(x_1+x_2)^d)\\ 
&+(a_2(x)+a_4(x))((x_1+x_2-1)^d-(x_1+x_2)^d) \\ 
&\leq (k_1+k_3)((x_1+x_2+1)^d-(x_1+x_2)^d)\\ 
&+(a_2(x)+a_4(x))((x_1+x_2-1)^d-(x_1+x_2)^d)\\
&\leq g_{d-1}(x_1+x_2)-d(k_2x_1+k_4x_2)(x_1+x_2)^{d-1}\\
&\leq g_{d-1}(x_1+x_2)-d\min\{k_2,k_4\}(x_1+x_2)^d,\end{array}$$
where $g_{d-1}$ is a polynomial of degree $d-1$. For this reason, proceeding as we did above for Schl\"ogl's model, we have that $Q$ is regular, that the chain does have a stationary distribution, and that all of the moments are finite of each of the stationary distributions are finite. The non-trivial lower bounds in Fig.~\ref{fig:toggle_joint} and Corollary \ref{uniqueness-test} show that it is unique.

\paragraph{\textbf{Bursty gene model:}}
For the bursty gene expression model of Sec.~\ref{sec:bursty}, let $w(x):=x_2^d$ and $(p(k))_{k\in\n}$ denotes the geometric distribution, $p(k)=(1-p(0))^kp(0)$ with $p(0)=1/(1+\langle b\rangle)$), and $\langle b^l\rangle$ denotes its $l^{th}$ moment. We then have
$$\begin{array}{ll}Qw(x) &=k_4x_1\sum_{k=0}^\infty p(k)((x_2+k)^d-x_2^d)
\\&+k_5x_2((x_2-1)^d-x_2^d) \\ 
&\leq k_4\sum_{l=0}^{d-1}\binom{d}{l}\langle b^{d-l}\rangle x_2^{l}
+k_5x_2((x_2-1)^d-x_2^d)\\ 
&=g_{d-1}(x_2)-dk_5x_2^d,\end{array}$$
where $g_{d-1}$ is a polynomial of degree $d-1$ (note that all moments of a geometric random variable are finite). Because the state space is $\{0,1\}\times\n$, $w$ is norm-like. Thus, proceeding as we did above for Schl\"ogl's model, we have that $Q$ is regular, that there exists at least one stationary distribution, and that each stationary distribution has all moments finite. For each set of parameter values, we solved LP \eqref{eq:lowerd} with $x=(0,0)$ and obtained a non-trivial lower on $\pi((0,0))$.  Corollary \ref{uniqueness-test} then showed that the stationary distribution is unique.

\paragraph{\textbf{Dimerisation network with multiple stationary distributions:}} For the network \eqref{eq:simple}, choosing $w(x):=x^{d-1}$ with integer $d>1$, we have
$$Qw(x)=g_{d-1}(x)-2k_2(d-1)x^{d},$$
where $g_{d-1}$ is a polynomial of degree $d-1$. Proceeding as for Schl\"ogl's model above, we have that $Q$ is regular, that there exists at least one stationary distribution, and that each stationary distribution has all moments finite.

\section*{References}
\bibliographystyle{apsrev4-1}
\bibliography{lit}

\end{document}